\theoremstyle{plain}
\newtheorem{thm}{Theorem}[section]
\newtheorem{lem}[thm]{Lemma}
\newtheorem{cor}[thm]{Corollary}
\newcommand{\edim}{\operatorname{edim}}
\title{Metric dimension and pattern avoidance in graphs}
\date{}
\author{Jesse Geneson\\
\small\tt geneson@gmail.com
}
\begin{document}
\maketitle
\begin{abstract}
In this paper, we prove a number of results about pattern avoidance in graphs with bounded metric dimension or edge metric dimension. We show that the maximum possible number of edges in a graph of diameter $D$ and edge metric dimension $k$ is at most $(\lfloor \frac{2D}{3}\rfloor +1)^{k}+k \sum_{i = 1}^{\lceil \frac{D}{3}\rceil } (2i)^{k-1}$, sharpening the bound of $\binom{k}{2}+k D^{k-1}+D^{k}$ from Zubrilina (2018). We also show that the maximum value of $n$ for which some graph of metric dimension $\leq k$ contains the complete graph $K_{n}$ as a subgraph is $n = 2^{k}$. We prove that the maximum value of $n$ for which some graph of metric dimension $\leq k$ contains the complete bipartite graph $K_{n,n}$ as a subgraph is $2^{\Theta(k)}$. Furthermore, we show that the maximum value of $n$ for which some graph of edge metric dimension $\leq k$ contains $K_{1,n}$ as a subgraph is $n = 2^{k}$. We also show that the maximum value of $n$ for which some graph of metric dimension $\leq k$ contains $K_{1,n}$ as a subgraph is $3^{k}-O(k)$. 

In addition, we prove that the $d$-dimensional grids $\prod_{i = 1}^{d} P_{r_{i}}$ have edge metric dimension at most $d$. This generalizes two results of Kelenc et al. (2016), that non-path grids have edge metric dimension $2$ and that $d$-dimensional hypercubes have edge metric dimension at most $d$. We also provide a characterization of $n$-vertex graphs with edge metric dimension $n-2$, answering a question of Zubrilina. As a result of this characterization, we prove that any connected $n$-vertex graph $G$ such that $\edim(G) = n-2$ has diameter at most $5$. More generally, we prove that any connected $n$-vertex graph with edge metric dimension $n-k$ has diameter at most $3k-1$.
\end{abstract}

\section{Introduction}
Metric dimension is a graph parameter with many applications including robot navigation, chemistry, pattern recognition, image processing, and locating intruders in networks \cite{kh, ch, pr, li, h}. In particular for the application to robot navigation \cite{kh}, the robot is assumed to be moving from vertex to vertex in a graph, where some of the vertices are distinguished as landmarks. The robot is able to determine its distance to each landmark, and it uses those distances to determine its location in the graph. The goal is to use as few landmarks as possible, and the number of landmarks used is the metric dimension.

Recently, a parameter closely related to metric dimension was defined in \cite{kel}. Imagine that the robot now moves from edge to edge instead of vertex to vertex. The robot can still determine its distance from the landmarks, where the distance from an edge $\left\{u, v\right\}$ to a landmark is the minimum of the distance from $u$ or $v$ to the landmark. Again the goal is to use as few landmarks as possible, and that number is the edge metric dimension. 

A number of results relating pattern avoidance and metric dimension have been proved in the last few decades, but perhaps the most famous one is the result of Khuller et al. \cite{kh} that no graph of metric dimension $2$ contains $K_5$ as a subgraph (and more generally, that graphs of metric dimension $k$ cannot have $K_{2^k+1}$ as a subgraph). More generally, Sudhakara and Kumar proved an upper bound of $(D+1)^2$ on the number of vertices in subgraphs of diameter $D$ in graphs of metric dimension $2$ \cite{waset}.

Besides pattern avoidance, researchers have also investigated the maximum size of graphs with a given diameter and metric dimension. In particular, Hernando et al. \cite{exmd} proved that the maximum possible number of vertices in a graph of diameter $D$ and metric dimension $k$ is at most $(\lfloor \frac{2D}{3}\rfloor +1)^{k}+k \sum_{i = 1}^{\lceil \frac{D}{3}\rceil } (2i-1)^{k-1}$. Kelenc et al. \cite{kel} proved that graphs with edge metric dimension $k$ and diameter $D$ have at most $(D+1)^k$ edges. Later, Zubrilina sharpened the bound for edge metric dimension to $\binom{k}{2}+k D^{k-1}+D^k$ \cite{zu}. 

Kelenc et al. \cite{kel} also bounded the edge metric dimension of several classes of graphs, including $2$-dimensional grid graphs and $d$-dimensional hypercube graphs. They asked for a characterization of all graphs with edge metric dimension $n-1$, which Zubrilina found \cite{zu}. Using the characterization, Zubrilina proved that such graphs have diameter at most $2$. Furthermore, Zubrilina asked for a characterization of all graphs of edge metric dimension $n-2$.

In this paper, we prove several new bounds related to pattern avoidance, metric dimension, and diameter. We also answer Zubrilina's question and show that $n$-vertex graphs with edge metric dimension $n-2$ have diameter at most $5$. Before stating more details about our results, we first discuss some terminology.

\subsection{Terminology}
All graphs in this paper will be simple and connected. Let $d(u, v)$ denote the distance between $u$ and $v$ for any vertices $u, v \in V(G)$. The \emph{distance vector} $d_{v, S}$ of a vertex $v$ with respect to a subset of vertices $S \subset V(G)$ is the vector with $|S|$ coordinates which has a single coordinate with value $d(x, v)$ for each vertex $x \in S$. A set of vertices $S \subset V(G)$ is a \emph{resolving set} for the vertices of $G$ if no two vertices in $V(G)$ have the same distance vector with respect to $S$. The \emph{metric dimension} $\dim(G)$ is the minimum size of a resolving set for $G$, and a resolving set $S$ for the vertices of $G$ is called a \emph{metric basis} if $S$ contains exactly $\dim(G)$ elements.

For any vertex $v \in V(G)$ and edge $e \in E(G)$ such that $e = \left\{x, y \right\}$, let $d(e, v) = \min(d(x, v), d(y, v))$. The distance vector $d_{e, S}$ of an edge $e$ with respect to a subset of vertices $S \subset V(G)$ is the vector with $|S|$ coordinates which has a single coordinate with value $d(x, e)$ for each vertex $x \in S$. A set of vertices $S \subset V(G)$ is a resolving set for the edges of $G$ if no two edges in $E(G)$ have the same distance vector with respect to $S$. The edge metric dimension $\edim(G)$ is the minimum size of a resolving set for the edges of $G$, and a resolving set $S$ for the edges of $G$ is called a metric basis if $S$ contains exactly $\edim(G)$ elements.

The chromatic number of a graph $G$ is the minimum number of colors required to label the vertices of $G$ so that no edge has two vertices of the same color. The degeneracy of a graph $G$ is the minimum $r$ such that every subgraph of $G$ has a vertex of degree at most $r$. We will call $x$ a non-mutual neighbor of vertices $u, v$ if $x$ is adjacent to $u$ or $v$, but not both.

\subsection{New results}

In Section \ref{btdd}, we prove that the maximum possible number of edges in a graph of diameter $D$ and edge metric dimension $k$ is at most $(\lfloor \frac{2D}{3}\rfloor +1)^{k}+k \sum_{i = 1}^{\lceil \frac{D}{3}\rceil } (2i)^{k-1}$. This sharpens the bound of $\binom{k}{2}+k D^{k-1}+D^{k}$ from Zubrilina \cite{zu}, which improved on the bound of Kelenc et al. \cite{kel}. 

In Section \ref{sfpc}, we derive several results about specific forbidden subgraphs in graphs of metric dimension or edge metric dimension $\leq k$. We prove that the maximum value of $n$ for which some graph of metric dimension $\leq k$ contains $K_{n}$ as a subgraph is $n = 2^{k}$. We also prove that the maximum value of $n$ for which some graph of metric dimension $\leq k$ contains $K_{n,n}$ as a subgraph is $2^{\Theta(k)}$, and that the maximum value of $n$ for which some graph of edge metric dimension $\leq k$ contains $K_{n,n}$ as a subgraph is $2^{\Theta(k)}$. 

In the same section, we show that the maximum value of $n$ for which some graph of edge metric dimension $\leq k$ contains $K_{1,n}$ as a subgraph is $n = 2^{k}$. We also show that the maximum value of $n$ for which some graph of metric dimension $\leq k$ contains $K_{1,n}$ as a subgraph is between $3^{k}-k-1$ and $3^k-1$. Using these bounds, we prove that the maximum possible chromatic number and degeneracy of a graph of metric dimension $\leq k$ are $2^{\Theta(k)}$.

In Section \ref{mr}, we prove that the $d$-dimensional grids $\prod_{i = 1}^{d} P_{r_{i}}$ have edge metric dimension at most $d$. This bound generalizes the results of Kelenc et al. (2016) that non-path grids have edge metric dimension $2$ and that $d$-dimensional hypercubes have edge metric dimension at most $d$. In addition, we provide a characterization of $n$-vertex graphs with edge metric dimension $n-2$. This answers a question of Zubrilina, and as a result of this characterization, we prove that any connected $n$-vertex graph $G$ such that $\edim(G) = n-2$ has diameter at most $5$. In general, we prove that any connected $n$-vertex graph with edge metric dimension $n-k$ has diameter at most $3k-1$.

\section{Bounds in terms of diameter and dimension}\label{btdd}

The next proof is very similar to the proof in \cite{exmd} that the maximum possible order of a graph of diameter $D$ and metric dimension $k$ is equal to $(\lfloor \frac{2D}{3}\rfloor +1)^{k}+k \sum_{i = 1}^{\lceil \frac{D}{3}\rceil } (2i-1)^{k-1}$. 

\begin{thm}
Let $G$ be a graph of diameter $D$ and edge metric dimension $k$. Then $|E(G)| \leq (\lfloor \frac{2D}{3}\rfloor +1)^{k}+k \sum_{i = 1}^{\lceil \frac{D}{3}\rceil } (2i)^{k-1}$.
\end{thm}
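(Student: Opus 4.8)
The plan is to mimic the Hernando--Mora--Pelayo--Seara--Wood argument for vertices, but tracking edges and being careful about the ``min'' in the definition of $d(e,v)$. Fix a metric basis $S = \{w_1,\dots,w_k\}$ for the edges of $G$. Every edge $e$ is determined by its distance vector $(d(e,w_1),\dots,d(e,w_k)) \in \{0,1,\dots,D\}^k$, so the crude bound is $|E(G)|\le (D+1)^k$. To sharpen this, I would partition the coordinate values $\{0,1,\dots,D\}$ into ``small'' values and ``large'' values at the threshold roughly $2D/3$, exactly as in \cite{exmd}: a coordinate is \emph{small} if $d(e,w_i)\le \lfloor 2D/3\rfloor$ and \emph{large} otherwise. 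The key structural fact to prove is that \emph{at most one coordinate of any edge's distance vector can be large}. Indeed, if $d(e,w_i)$ and $d(e,w_j)$ were both larger than $\lfloor 2D/3\rfloor$, pick endpoints realizing these distances; walking from an endpoint of $e$ along a geodesic to $w_i$ and then from $w_i$ to (an endpoint attached to) $w_j$ would force a distance exceeding $D$ in $G$ (via the triangle inequality applied to the two geodesics through $e$), contradicting $\operatorname{diam}(G)=D$. This is the same ``two far landmarks are incompatible'' phenomenon, and the arithmetic is identical to the vertex case once one checks the min in $d(e,\cdot)$ does not spoil the triangle inequality — which it does not, since $|d(e,x)-d(e,y)|\le d(x,y)$ still holds for edges.

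Given that, I would count as follows. Edges all of whose coordinates are small contribute at most $(\lfloor 2D/3\rfloor+1)^k$, since each of the $k$ coordinates independently takes one of $\lfloor 2D/3\rfloor+1$ values. Edges with exactly one large coordinate: choose which of the $k$ coordinates, say coordinate $i$, is large; then the remaining $k-1$ coordinates are small. Now I need the finer observation from \cite{exmd}: if coordinate $i$ equals some large value $D - j + 1$ (equivalently the landmark $w_i$ is at distance roughly $D$ from $e$), then every other coordinate $d(e,w_\ell)$ is squeezed into an interval of length about $2j$ around a fixed value determined by $d(w_i,w_\ell)$, again by the triangle inequality — so the number of choices for the other $k-1$ coordinates is at most $(2i)^{k-1}$ when the large coordinate takes the $i$-th largest possible value, for $i$ ranging from $1$ to $\lceil D/3\rceil$ (the large values are exactly those exceeding $\lfloor 2D/3\rfloor$, of which there are $\lceil D/3\rceil$). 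Summing over the $k$ choices of which coordinate is large and over the $\lceil D/3\rceil$ possible large values gives the term $k\sum_{i=1}^{\lceil D/3\rceil}(2i)^{k-1}$. Adding the two contributions yields the claimed bound.

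The main obstacle, and the only place the edge version genuinely differs from \cite{exmd}, is verifying that the geometric squeezing lemmas survive the passage from vertices to edges. Concretely I need: (i) for an edge $e=\{x,y\}$ and vertex $w$, $d(e,w)\in\{d(x,w),d(x,w)\pm\text{(}0\text{ or }1\text{)}\}$ has the right interaction with geodesics so that ``$d(e,w_i)$ large'' still forces all of $e$ to lie in a small ball far from $w_i$; and (ii) the interval-length estimate $|d(e,w_\ell) - d(e,w_i) \text{-offset}| \le$ (twice the ``slack'' $j$) goes through with the same constants. Both follow from the two inequalities $d(e,w)\le d(e,w')+d(w',w)$ and $d(e,w)\ge d(e,w')-d(w',w)$, which hold verbatim for the edge-to-vertex distance. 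Once these are in hand, the counting is word-for-word that of Hernando et al., with $(2i-1)$ replaced by $2i$ because an edge, unlike a vertex, has two endpoints and hence its distance to a landmark can be ``off by one'' in the direction that enlarges the admissible range — this parity shift is exactly what accounts for the difference between their bound and ours. I would therefore structure the write-up as: (1) state and prove the ``at most one large coordinate'' lemma for edges; (2) state and prove the squeezing estimate for edges; (3) assemble the count.
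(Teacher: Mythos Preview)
Your decomposition is inverted relative to the one that actually works, and the key structural lemma you propose is false. You partition coordinates into ``small'' ($\le \lfloor 2D/3\rfloor$) and ``large'' ($>\lfloor 2D/3\rfloor$) and assert that at most one coordinate of any edge's distance vector can be large. This fails already on $C_9$: take the edge metric basis $S=\{v_0,v_1\}$ (two adjacent vertices), so $D=4$ and $\lfloor 2D/3\rfloor=2$; the edge $\{v_4,v_5\}$ has distance vector $(4,3)$, both coordinates large. More generally, nothing prevents the landmarks in an edge metric basis from clustering together, in which case an edge far from one is far from all. Your triangle-inequality sketch cannot rescue this: from $d(e,w_i)>2D/3$ and $d(e,w_j)>2D/3$ you only obtain upper bounds such as $d(w_i,w_j)\le d(e,w_i)+d(e,w_j)+1$, never a distance exceeding $D$.

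The same inversion breaks your squeezing step. If $d(e,w_i)=D-j+1$ is large, the inequality $|d(e,w_i)-d(e,w_\ell)|\le d(w_i,w_\ell)$ confines $d(e,w_\ell)$ only to an interval of length roughly $2D-d(w_i,w_\ell)-j$, which is not $O(j)$ unless $d(w_i,w_\ell)$ is itself near $D$ --- again not guaranteed. The correct pivot (and the one the paper uses, following Hernando et al.) is the \emph{dual} split: either $d(e,v)\ge c+1$ for \emph{every} $v\in S$, contributing at most $(D-c)^k$ edges, or $d(e,v)=i\le c$ for \emph{some} $v\in S$. In the latter case any two edges $e,f$ with $d(e,v)=d(f,v)=i$ satisfy $|d(e,u)-d(f,u)|\le 2i+1$ for every $u\in S$ (route through $v$), so each of the remaining $k-1$ coordinates lies in a window of $2i+2$ values and $|\{e:d(e,v)=i\}|\le (2i+2)^{k-1}$. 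Summing gives $k\sum_{i=0}^{c}(2i+2)^{k-1}$, and choosing $c=\lceil D/3\rceil-1$ yields the stated bound. The squeezing comes from being \emph{close} to a landmark, not far from one; swapping that is where your plan goes wrong. Your observation that the extra endpoint of an edge shifts $2i-1$ to $2i$ is correct, but it belongs in this corrected framework.
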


\begin{proof}
Let $S$ be a metric basis for the edges of $G$ and let $c \in [0, D]$ be an integer that will be chosen at the end. For each $v \in S$ and $i \in [0, c]$, define $N_{i}(v) = \left\{e \in E(G) : d(e, v) = i \right\}$. First note that $|d(e, u)-d(f, u)| \leq 2i+1$ for any two edges $e, f \in N_{i}(v)$ and any vertex $u \in S$. Thus for any edge $e \in N_{i}(v)$, we have that $d(e, t)$ has at most $2i+2$ possible values for each $t \in S$ such that $t \neq v$. Thus $|N_{i}(v)| \leq (2i+2)^{k-1}$. 

Consider $e \in E(G)$ such that $e \not \in N_{i}(v)$ for all $i \in [0, c]$ and $v \in S$, i.e., $d(e,v) \geq c+1$ for all $v \in S$. Since $d_{e, S}$ only has entries between $c+1$ and $D$ inclusive, there are at most $(D-c)^k$ such edges. Since every edge is either in $N_{i}(v)$ for some $i \in [0, c]$ and $v \in S$, or at least distance $c+1$ from every vertex $v \in S$, we have

\[ |E(G)| \leq (D-c)^{k}+k \sum_{i = 0}^{c} (2i+2)^{k-1}. \]

Setting $c = \lceil \frac{D}{3} \rceil -1$ gives the upper bound.
\end{proof}

We use both of the theorems below for the pattern avoidance results in the next section. The first result generalizes the proof for the $k = 2$ case from \cite{waset}. 

\begin{thm}\label{fordiam}
Let $G$ be a graph with $\dim(G) = k$, and let $H$ be a subgraph of $G$ with diameter $D$. Then $|V(H)| \leq (D+1)^{k}$.
\end{thm}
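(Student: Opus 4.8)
The plan is to mimic the classical argument of Khuller et al.\ (the $k=2$ case in \cite{waset}), which bounds the number of vertices in a bounded-diameter subgraph by looking at the distance vectors restricted to a metric basis. Let $S = \{s_1, \dots, s_k\}$ be a metric basis for $G$, so that the map $v \mapsto d_{v,S}$ is injective on $V(G)$. Fix the subgraph $H$ of diameter $D$, and fix any reference vertex $w \in V(H)$. For each $v \in V(H)$ and each $s_j \in S$, the triangle inequality (applied along a path inside $H$ from $v$ to $w$, which has length at most $D$) gives $|d(v, s_j) - d(w, s_j)| \le d_H(v,w) \le D$, so $d(v,s_j)$ takes at most $D+1$ distinct values as $v$ ranges over $V(H)$, namely the integers in $[d(w,s_j) - D,\ d(w,s_j)+D]$ that are consistent; more carefully, relative to $w$ the shifted coordinate $d(v,s_j) - d(w,s_j)$ lies in $\{-D, \dots, D\}$, but one can do better.

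The key refinement is to pin down the range more tightly so that only $D+1$ values survive per coordinate. First I would observe: for a fixed $s_j$, among all vertices of $V(H)$ let $m_j = \min_{v \in V(H)} d(v, s_j)$. Then for every $v \in V(H)$ I claim $d(v,s_j) \le m_j + D$: indeed, if $u \in V(H)$ achieves $d(u,s_j) = m_j$, then $d(v,s_j) \le d(v,u) + d(u,s_j) \le d_H(v,u) + m_j \le D + m_j$, using that the distance in $G$ is at most the distance within $H$. Hence each coordinate $d(v,s_j)$ takes values in the set $\{m_j, m_j+1, \dots, m_j + D\}$, which has exactly $D+1$ elements. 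Therefore the distance vector $d_{v,S}$, as $v$ ranges over $V(H)$, lies in a product set of size at most $(D+1)^k$. Since $S$ is a resolving set for $G$, distinct vertices of $V(H) \subseteq V(G)$ have distinct distance vectors, so $|V(H)| \le (D+1)^k$.

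The only subtlety — and the step I expect to need the most care — is making sure the distance-vector injectivity is applied in $G$ while the diameter bound is applied in $H$: the crucial inequality is $d_G(v,u) \le d_H(v,u) \le D$ for $v,u \in V(H)$, which holds because any path in $H$ is also a path in $G$, so $H$ being a (not necessarily induced or isometric) subgraph is enough. Everything else is the routine counting argument: $k$ coordinates, each confined to an interval of $D+1$ consecutive integers, and an injection into that product. I would write the proof in exactly that order — introduce $S$, fix $H$, establish the per-coordinate interval bound via the minimizing vertex, then invoke injectivity of $v \mapsto d_{v,S}$ to conclude.
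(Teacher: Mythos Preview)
Your proof is correct and follows essentially the same approach as the paper: show that each coordinate of $d_{v,S}$ takes at most $D+1$ values for $v \in V(H)$, then invoke injectivity of $v \mapsto d_{v,S}$ on $V(G)$. The paper's proof is a two-line sketch of exactly this argument (it simply asserts the $D+1$-values-per-coordinate claim without writing out the triangle-inequality step or the $d_G \le d_H$ observation that you correctly spelled out).
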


\begin{proof}
Let $S$ be a metric basis for $V(G)$. For each vertex $v$ in $H$, each coordinate of $d_{v, S}$ has at most $D+1$ possible values, since $H$ has diameter $D$. Thus there are at most $(D+1)^{k}$ vertices in $H$. 
\end{proof}

\begin{thm}\label{fordiame}
Let $G$ be a graph with $\edim(G) = k$, and let $H$ be a subgraph of $G$ with diameter $D$. Then $|E(H)| \leq (D+1)^k$.
\end{thm}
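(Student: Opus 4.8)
The plan is to mirror the proof of Theorem \ref{fordiam}, adding the one extra observation needed because the distances $d(e,v)$ are measured in $G$ rather than in $H$. Let $S$ be a metric basis for the edges of $G$, so $|S| = k$. As in the vertex case, the goal is to show that the distance vector $d_{e,S}$ takes at most $(D+1)^k$ distinct values as $e$ ranges over $E(H)$; since $S$ resolves the edges of $G$, distinct edges of $H$ then have distinct distance vectors, and the bound follows immediately. (If $E(H) = \emptyset$ the statement is trivial, so assume $H$ has an edge.)

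To control a single coordinate, fix $t \in S$. I would show that $|d(e,t) - d(f,t)| \le D$ for any two edges $e, f \in E(H)$. Writing $e = \{x,y\}$ and $f = \{x',y'\}$, choose endpoints so that $d(e,t) = d(x,t)$ and $d(f,t) = d(x',t)$. Since $H$ has diameter $D$ and $x, x'$ are both vertices of $H$, we have $d(x,x') \le d_H(x,x') \le D$, the first inequality because any path in $H$ is a path in $G$. Then $d(e,t) = d(x,t) \le d(x,x') + d(x',t) \le D + d(f,t)$, and by symmetry $|d(e,t) - d(f,t)| \le D$. Hence the values $d(e,t)$ for $e \in E(H)$ all lie in an interval of length $D$, so this coordinate takes at most $D+1$ distinct values.

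Applying this to each of the $k$ coordinates indexed by the vertices of $S$ shows that $d_{e,S}$ takes at most $(D+1)^k$ values over $e \in E(H)$, which gives $|E(H)| \le (D+1)^k$. There is no real obstacle here: the only point requiring any thought beyond the vertex case is the passage from $H$-distances to $G$-distances between endpoints of edges of $H$, and this is immediate since $H$ is a connected subgraph of $G$.
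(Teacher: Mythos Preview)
Your proof is correct and follows the same approach as the paper's: bound each coordinate of the edge distance vector by showing it takes at most $D+1$ values, then use that $S$ resolves edges. You have simply made explicit the triangle-inequality step (and the harmless passage from $d_H$ to $d_G$) that the paper leaves implicit in the phrase ``since $H$ has diameter $D$.''
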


\begin{proof}
Let $S$ be a metric basis for $E(G)$. For each edge $e$ in $H$, each coordinate of $d_{e, S}$ has at most $D+1$ possible values, since $H$ has diameter $D$. Thus there are at most $(D+1)^{k}$ edges in $H$. 
\end{proof}

\section{Specific forbidden patterns, chromatic number, and degeneracy}\label{sfpc}

The upper bound in the next theorem is well known \cite{kh}. We prove a matching lower bound to show that it is sharp. The graph in the lower bound construction also appears in \cite{zu}.

\begin{thm}\label{mdcomp}
The maximum possible value of $n$ for which some graph of metric dimension $\leq k$ contains $K_{n}$ as a subgraph is $n = 2^{k}$.
\end{thm}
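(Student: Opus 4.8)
The plan is to prove the two directions separately. For the upper bound, that any graph of metric dimension $\le k$ cannot contain $K_{2^k+1}$, I would invoke Theorem~\ref{fordiam}: a complete graph $K_n$ has diameter $D = 1$, so if it is a subgraph of a graph $G$ with $\dim(G) = k$ (or $\le k$, after padding the metric basis), then $n = |V(K_n)| \le (D+1)^k = 2^k$. Strictly speaking, Theorem~\ref{fordiam} is stated for $\dim(G) = k$, but if $\dim(G) < k$ we may simply add dummy vertices to the resolving set, or note monotonicity, so the bound $n \le 2^k$ holds for metric dimension $\le k$. This direction is essentially immediate from the earlier result.

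For the lower bound, I would exhibit, for each $k$, a graph $G$ with $\dim(G) \le k$ that contains $K_{2^k}$ as a subgraph. The natural construction is to take a clique $C$ on $2^k$ vertices, index those vertices by the $2^k$ binary strings of length $k$, and attach $k$ pendant vertices $u_1, \dots, u_k$ (the intended resolving set) where $u_i$ is joined to exactly those clique vertices whose $i$th coordinate is $0$ (or some similar rule), perhaps with the $u_i$ also forming a clique or path among themselves to keep the graph connected and control distances. The key point is that within the clique all pairwise distances are $1$, so a vertex $x$ of $C$ must be distinguished from the other clique vertices by its distance pattern to $u_1, \dots, u_k$: $d(u_i, x) = 1$ if the $i$th bit is $0$ and $d(u_i, x) = 2$ if the $i$th bit is $1$. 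Since the binary strings are distinct, all $2^k$ clique vertices receive distinct distance vectors, and one checks that the $u_i$'s themselves and any auxiliary vertices also get distinct vectors, so $S = \{u_1, \dots, u_k\}$ resolves $G$ and $\dim(G) \le k$.

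The main obstacle, and the part requiring care, is arranging the auxiliary structure (how the $u_i$ connect to each other, and whether extra vertices are needed for connectivity or to break ties) so that \emph{every} pair of vertices in $G$ — not just pairs within the clique — is resolved by $\{u_1, \dots, u_k\}$, while simultaneously keeping each $d(u_i, \cdot)$ taking only the values needed. For instance, if two pendants $u_i, u_j$ are both nonadjacent to each other and to a common vertex, they may collide; a standard fix is to add one more apex vertex adjacent to all clique vertices and all pendants, or to make the pendants pairwise adjacent, and then verify the finitely many vertex types. Since the excerpt notes that "the graph in the lower bound construction also appears in \cite{zu}," I would follow that construction and simply present the verification that $S$ is resolving and that $K_{2^k} \subseteq G$, concluding that the maximum value of $n$ is exactly $2^k$.
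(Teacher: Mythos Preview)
Your approach matches the paper's: the upper bound via Theorem~\ref{fordiam} and the lower bound via the clique on $2^k$ binary-labeled vertices with $k$ attached landmarks $u_1,\dots,u_k$. Your worry about auxiliary structure is unnecessary, though: in the bare construction (no edges among the $u_i$, no apex), each $u_i$ is already adjacent to the all-zeros clique vertex, so the graph is connected, and each $u_i$ is the unique vertex at distance $0$ from itself, so the $u_i$'s are automatically resolved---no extra vertices or edges are needed.
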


\begin{proof}
The upper bound was proved in \cite{kh}, and it also follows from Theorem \ref{fordiam}.

For the lower bound, define $G$ to be the graph obtained from $K_{2^{k}}$ by adding $k$ vertices with edges defined as follows: For each vertex $v$ in the copy of $K_{2^{k}}$, label $v$ with a binary string of length $k$. For each of the new vertices $u_{1}, \dots, u_{k}$, add an edge from $u_{i}$ to $v$ if the $i^{th}$ digit of $v$ is $0$.

First note that the metric dimension of $G$ is at least $k$ since $G$ contains a complete subgraph with $2^{k}$ vertices. If we set $S = \left\{u_{1}, \dots, u_{k} \right\}$, then $d(v, u_i)$ is one more than the $i^{th}$ digit of $v$ for each $1 \leq i \leq k$ and all $v$ in the copy of $K_{2^{k}}$. Moreover each $u_{i}$ is the only vertex $v$ in $G$ with $d(v, u_i) = 0$, so $S$ is a metric basis for $G$.
\end{proof}

Both of the next two results give bounds on the number of edges in graphs of metric dimension or edge metric dimension $k$, as well as corollaries about the chromatic number and degeneracy. In $K_{1, n}$ for $n > 1$, we refer to the vertex of degree $n$ as the center vertex.

\begin{thm}\label{emdstar}
The maximum possible value of $n$ for which some graph of edge metric dimension $\leq k$ contains $K_{1,n}$ as a subgraph is $n = 2^{k}$.
\end{thm}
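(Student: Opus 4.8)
The plan is to establish both an upper bound of $2^k$ and a matching lower bound construction, mirroring the structure of Theorem~\ref{mdcomp}. For the upper bound, suppose $G$ has $\edim(G) \le k$ and contains $K_{1,n}$ as a subgraph with center vertex $w$ and leaves $v_1, \dots, v_n$; the relevant edges are $e_j = \{w, v_j\}$ for $j = 1, \dots, n$. Let $S$ be a resolving set for the edges of $G$ with $|S| \le k$. The key observation is that for any vertex $u \in S$, the value $d(e_j, u) = \min(d(w,u), d(v_j, u))$ takes only two possible values: it equals $d(w,u)$ whenever $d(v_j, u) \ge d(w, u)$, and otherwise equals $d(w,u) - 1$ (since $v_j$ is adjacent to $w$, we always have $|d(v_j,u) - d(w,u)| \le 1$). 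Hence each coordinate of the distance vector $d_{e_j, S}$ has at most $2$ possible values, so there are at most $2^k$ distinct such vectors; since $S$ resolves the edges, $n \le 2^k$.

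For the lower bound, I would adapt the construction from Theorem~\ref{mdcomp} to the star setting. Take $K_{1, 2^k}$ with center $w$ and leaves labeled by the $2^k$ binary strings of length $k$, and add $k$ new vertices $u_1, \dots, u_k$. For each leaf $v$, add an edge from $u_i$ to $v$ exactly when the $i$th digit of $v$ is $0$; one must also decide how $w$ attaches to the $u_i$'s (for instance, joining $w$ to every $u_i$, or leaving $w$ at distance $2$ from each of them — the details of which choice keeps the graph connected with the right distances will need to be checked). Then with $S = \{u_1, \dots, u_k\}$, the claim is that $d(e_v, u_i) = \min(d(w, u_i), d(v, u_i))$ distinguishes the edges $e_v = \{w, v\}$: the $i$th coordinate should record whether the $i$th digit of $v$ is $0$ or $1$, because a leaf $v$ with $i$th digit $0$ is adjacent to $u_i$ and hence $d(v, u_i) = 1$, while a leaf with $i$th digit $1$ is at distance $2$ from $u_i$. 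One then checks that no other edge of $G$ shares a distance vector with any $e_v$, and that $\edim(G) \le k$ overall (possibly $S$ needs a small augmentation, but it should remain of size $O(k)$, and with care exactly $k$).

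The main obstacle I anticipate is the lower bound bookkeeping: unlike the complete-graph case, here the "interesting" edges $e_v$ all share the endpoint $w$, so their distance vectors differ only through the leaf endpoints, and I must ensure (i) the $u_i$'s are placed so that $d(w, u_i)$ is a fixed value not disturbing the parity-by-digit structure, (ii) the edges incident to the $u_i$'s among themselves and to $w$ do not collide with the $e_v$'s, and (iii) the whole graph genuinely has edge metric dimension $k$ rather than something larger, which requires $S$ (or its augmentation) to resolve \emph{all} edges, not just the star edges. I expect the cleanest route is to first verify $S = \{u_1,\dots,u_k\}$ resolves the star edges exactly as above, then handle the few remaining edge types by a short case analysis, and finally invoke the fact that $G$ contains $K_{1,2^k}$ together with the just-proved upper bound to conclude $\edim(G) = k$ and hence that $2^k$ is achievable.
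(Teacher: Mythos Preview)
Your approach matches the paper's. The upper bound argument is correct as written, with more detail than the paper gives on why only two values arise. For the lower bound, the paper uses exactly your construction with the choice of \emph{not} joining $w$ to any $u_i$; the other option you float (joining $w$ to every $u_i$) fails outright, since then $d(w,u_i)=1$ and every star edge $\{w,v\}$ would have distance vector $(1,\dots,1)$ regardless of $v$. With no $w$--$u_i$ edges one has $d(w,u_i)=2$ (via any leaf whose $i$th digit is $0$), and then $d(\{w,v\},u_i)=\min(2,d(v,u_i))$ is $1$ or $2$ according to the $i$th digit of $v$, as desired. One small correction: in this graph a leaf with $i$th digit $1$ is actually at distance $3$ from $u_i$, not $2$ (its only neighbors are $w$ and some $u_j$'s, none of which are adjacent to $u_i$), but this is harmless since the edge distance is still $\min(2,3)=2$. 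The non-star edges are precisely those of the form $\{u_i,v\}$ with $i$th digit of $v$ equal to $0$; each has $i$th coordinate $0$, which already separates it from every star edge, and for $t\neq i$ the $t$th coordinate is again one more than the $t$th digit of $v$, so these edges are distinguished among themselves as well. Thus no augmentation of $S$ is needed and the case analysis is as short as you hoped.
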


\begin{proof}
The upper bound was proved in \cite{kel}, but we include the argument for completeness. Let $H$ be a subgraph isomorphic to $K_{1,n}$ of a graph of edge metric dimension $k$. For all edges $e \in H$, each coordinate of $d_{e, S}$ has at most $2$ possible values. Thus $H$ has at most $2^k$ edges. 

For the lower bound, define $G$ to be the graph obtained from $K_{1, 2^{k}}$ with center vertex $c$ by adding $k$ vertices with edges defined as follows: For each non-center vertex $v$ in the copy of $K_{1,2^{k}}$, label $v$ with a binary string of length $k$. For each of the new vertices $u_{1}, \dots, u_{k}$, add an edge from $u_{i}$ to $v$ if the $i^{th}$ digit of $v$ is $0$.

First note that the edge metric dimension of $G$ is at least $k$ since $G$ contains a star with $2^{k}$ non-center vertices. If we set $S = \left\{u_{1}, \dots, u_{k} \right\}$, then $d(\left\{v, c \right\}, u_i)$ is one more than the $i^{th}$ digit of $v$ for each $1 \leq i \leq k$ and all non-center vertices $v$ in the copy of $K_{1, 2^{k}}$. 

All other edges $e \in E(G)$ are adjacent to some $u_{i}$ and thus have $d(e, u_i) = 0$. For each $t \neq i$, $d(e, u_t)$ is one more than the $t^{th}$ digit of $v$, where $v$ is the vertex such that $e = \left\{u_{i},v\right\}$. Thus $S$ is a metric basis for the edges of $G$.
\end{proof}

The next result shows that the bound in Theorem \ref{fordiam} is not far from the exact value for $D = 2$.

\begin{thm}\label{mdstar}
The maximum possible value of $n$ for which some graph of metric dimension $\leq k$ contains $K_{1,n}$ as a subgraph is between $3^{k}-k-1$ and $3^k-1$.
\end{thm}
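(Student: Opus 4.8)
The upper bound $n \le 3^k - 1$ is immediate from Theorem~\ref{fordiam}: a copy of $K_{1,n}$ has diameter $2$, so it has at most $3^k$ vertices, hence at most $3^k - 1$ leaves; but we should note the sharper accounting — the center vertex and the $n$ leaves all have distinct distance vectors with respect to a metric basis $S$, and the center's vector differs from each leaf's vector in the coordinate of whichever basis vertex (if any) lies inside or near the star, so in fact we just get $n + 1 \le 3^k$. The real work is the lower bound $n \ge 3^k - k - 1$, which requires an explicit construction of a graph $G$ with $\dim(G) \le k$ containing $K_{1,n}$ with $n = 3^k - k - 1$ leaves.

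The plan is to mimic the constructions in Theorems~\ref{mdcomp} and~\ref{emdstar}, but using ternary strings instead of binary ones, exploiting the fact that a single landmark vertex can induce three distinct distances ($0$, $1$, $2$) on a neighborhood of radius $2$. Concretely, I would start with a star $K_{1,N}$ with center $c$ and leaf set $L$, add landmark vertices $u_1, \dots, u_k$, and label each leaf $v \in L$ by a string in $\{0,1,2\}^k$; the edge from $u_i$ to a leaf $v$ is present when the $i$th digit of $v$ is (say) $0$, and additionally one routes paths of length $2$ through auxiliary structure (or through the $u_i$'s and $c$) so that a leaf with $i$th digit $1$ is at distance $2$ from $u_i$ and a leaf with $i$th digit $2$ is at distance... — here is where care is needed: with only the center $c$ available as an intermediate vertex, $c$ is at distance $1$ from every leaf, so every leaf is within distance $2$ of $u_i$ as soon as $u_i$ is adjacent to $c$. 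The fix is to let $d(v, u_i) \in \{1, 2\}$ encode a \emph{binary} digit unless $u_i$ is made adjacent to $v$ directly, giving the third value $0$; one must then check that leaves are still pairwise resolved and that the center and the $u_i$'s are resolved from everything. Each $u_i$ is the unique vertex at distance $0$ from itself, which handles resolving the landmarks; the loss of $k+1$ from $3^k$ comes from discarding the $k$ all-"forbidden" strings that would collide with the landmarks' own vectors (or with $c$), plus the center itself — I would track exactly which $k+1$ strings must be excluded.

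The key steps, in order, are: (i) fix the digit-to-distance encoding and verify that for each coordinate $i$ the three digit values $0,1,2$ really do produce three distinct values of $d(v,u_i)$ over the leaves; (ii) conclude that distinct leaves get distinct distance vectors with respect to $S = \{u_1,\dots,u_k\}$, since distinct ternary labels differ in some coordinate; (iii) check that $c$ and each $u_i$ have distance vectors distinct from all leaves and from each other — this is where the $k+1$ forbidden labels get excluded, so that $N = 3^k - k - 1$; (iv) observe $\dim(G) \ge k$ because $G$ contains $K_{1,N}$ with $N > 2^{k-1}$... — more carefully, I would instead argue $\dim(G) \ge k$ directly, e.g.\ because $G$ contains a large clique-like or twin-rich substructure, or simply because any resolving set must distinguish the $3^k - k - 1 + 1$ vertices at distance $\le 2$ from $c$, forcing $\ge k$ landmarks. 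The main obstacle is step (iii) together with getting the encoding in step (i) to genuinely deliver three distinct distances per coordinate while keeping the diameter of the star-plus-landmarks gadget under control; once the distance function is pinned down, steps (ii) and (iv) are routine.
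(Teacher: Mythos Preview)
Your upper bound is fine and matches the paper. The lower-bound sketch has the right skeleton (label leaves by ternary strings, use $k$ landmarks, delete at most $k+1$ colliding labels), but the central construction step is not resolved, and your proposed ``fix'' is actually wrong.

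You correctly identify the obstacle: a landmark $u_i$ adjacent to the center $c$ is at distance at most $2$ from every leaf, so only two distance values are available, not three. Your fix, however, says to let $d(v,u_i)\in\{1,2\}$ encode a binary digit ``unless $u_i$ is made adjacent to $v$ directly, giving the third value $0$.'' Adjacency gives distance $1$, not $0$; distance $0$ occurs only when $v=u_i$. So this cannot produce a third distinguishable value, and with only $k$ added vertices and the center as the sole relay, no leaf can be at distance $3$ from any $u_i$. The encoding in your step~(i) therefore fails, and steps~(ii)--(iii) have nothing to stand on.

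The missing idea, which the paper supplies, is to add a second layer of $k$ auxiliary vertices $r_1,\dots,r_k$ in addition to the landmarks $s_1,\dots,s_k$, with $r_i$ adjacent to $s_i$. The landmarks $s_i$ are \emph{not} adjacent to the center. A leaf $v$ with $i$th digit $0$ is joined to $s_i$ (distance $1$); a leaf with $i$th digit $1$ is joined to $r_i$ (distance $2$ via $r_i$); a leaf with $i$th digit $2$ is joined to neither, and the shortest path to $s_i$ goes $v\to c\to w\to s_i$ for some leaf $w$ with $i$th digit $0$ (distance $3$). Now $d(v,s_i)$ genuinely takes three values, and the rest of your outline (resolving leaves by their ternary labels, then deleting the at most $k+1$ leaves whose distance vectors collide with $c$ or with some $r_i$) goes through.

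One more point: your step~(iv) is unnecessary. The theorem asks for a graph of metric dimension $\le k$ containing $K_{1,n}$; exhibiting the resolving set $S=\{s_1,\dots,s_k\}$ gives $\dim(G)\le k$, and there is no need to argue $\dim(G)\ge k$.
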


\begin{proof}
The upper bound follows from Theorem \ref{fordiam}, since $K_{1, n}$ has diameter $2$. 

For the lower bound, define $G$ to be the graph obtained from $K_{1, 3^{k}}$ with center vertex $c$ by adding $2k$ vertices $r_{1}, \dots, r_{k}, s_{1}, \dots, s_{k}$ with edges defined as follows: For each non-center vertex $v$ in the copy of $K_{1,3^{k}}$, label $v$ with a base $3$ string of length $k$. Add an edge from $s_{i}$ to $v$ if the $i^{th}$ digit of $v$ is $0$. Also add an edge from $r_{i}$ to $s_{i}$, and add an edge from $r_{i}$ to $v$ if the $i^{th}$ digit of $v$ is $1$. 

If we set $S = \left\{s_{1}, \dots, s_{k} \right\}$, then $d(v, s_i)$ is one more than the $i^{th}$ digit of $v$ for each $1 \leq i \leq k$ and all non-center vertices $v$ in the copy of $K_{1, 3^{k}}$. Moreover for each $i$, $s_{i}$ is the only vertex $v$ such that $d(v,s_i) = 0$.

Each $r_{i}$ has $d_{r_{i}, S}$ distinct from $d_{r_{j}, S}$ for all $j \neq i$, but it is possible that $d_{r_{i}, S} = d_{v, S}$ for some non-center vertex $v$ in the copy of $K_{1, 3^{k}}$, or that $d_{c, S} = d_{v, S}$ where $c$ is the center vertex in the copy of $K_{1, 3^{k}}$. Note that every coordinate of $d_{c, S}$ is $2$, so $d_{c, S}$ is distinct from $d_{r_{i}, S}$ for each $i$.

Let $G'$ be the graph obtained from $G$ by deleting any non-center vertex $v$ in the copy of $K_{1, 3^{k}}$ such that $d_{c, S} = d_{v, S}$ or $d_{r_{i}, S} = d_{v, S}$ for some $i$. There are at most $k+1$ such vertices $v$, so $G'$ contains a star with at least $3^{k}-k-1$ non-center vertices. By definition, $S$ is a resolving set for $G'$.
\end{proof}

Next we bound the size of complete bipartite subgraphs in graphs with bounded metric dimension or edge metric dimension.

\begin{thm}\label{mdknn}
The maximum possible value of $n$ for which some graph of metric dimension $\leq k$ contains $K_{n,n}$ as a subgraph is between $2^{\lfloor k/2 \rfloor}-1$ and $3^{k}/2$.
\end{thm}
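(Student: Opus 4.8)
The plan is to handle the two bounds separately. For the upper bound $n \le 3^k/2$, observe that $K_{n,n}$ has diameter $2$, so by Theorem \ref{fordiam} any graph of metric dimension $\le k$ containing $K_{n,n}$ satisfies $2n = |V(K_{n,n})| \le 3^k$, hence $n \le 3^k/2$. This is immediate and needs no extra work.

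For the lower bound $n \ge 2^{\lfloor k/2 \rfloor} - 1$, I would build an explicit graph, mimicking the constructions in Theorems \ref{mdcomp} and \ref{mdstar}. Set $m = \lfloor k/2 \rfloor$ and $n = 2^m - 1$. Start with $K_{n,n}$ on vertex classes $A = \{a_1,\dots,a_n\}$ and $B = \{b_1,\dots,b_n\}$; label the vertices of $A$ and of $B$ with the $2^m - 1$ nonzero binary strings of length $m$. Then attach $m$ new vertices $u_1,\dots,u_m$ to resolve the $A$-side and $m$ new vertices $w_1,\dots,w_m$ to resolve the $B$-side: add an edge from $u_i$ to $a \in A$ iff the $i$-th digit of $a$'s label is $0$, and similarly add an edge from $w_i$ to $b \in B$ iff the $i$-th digit of $b$'s label is $0$. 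This uses $2m \le k$ resolving landmarks, so once we check $S = \{u_1,\dots,u_m,w_1,\dots,w_m\}$ resolves $G$, we conclude $\dim(G) \le 2m \le k$ (padding with arbitrary extra vertices if $k$ is odd), while $G$ contains $K_{n,n}$.

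The verification is the routine but essential step. For $a \in A$: $d(a, u_i)$ equals $1$ if the $i$-th digit of $a$ is $0$ and equals $2$ otherwise (via a common neighbor in $A$, since every label is nonzero there is at least one such $u_j$ adjacent to $a$, and any two $A$-vertices share a $u$-neighbor because no label is all-ones — wait, this needs care: two $A$-vertices $a, a'$ have a common $u$-neighbor iff their labels share a $0$ in some coordinate, which holds unless their labels are complementary; so instead route through $B$: $a$ and $u_i$ are at distance $2$ via any $b \in B$ adjacent to $u_i$, using $|B| = n \ge 1$). Meanwhile $d(a, w_i) = 2$ for every $i$ and every $a \in A$, and symmetrically $d(b, u_i) = 2$ for every $b \in B$. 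Thus the $u$-coordinates of $d_{a,S}$ encode the label of $a$ (distinguishing all of $A$), the $w$-coordinates encode the label of $b$ (distinguishing all of $B$), and an $A$-vertex is distinguished from a $B$-vertex because $A$-vertices have a coordinate equal to $1$ among the $u_i$ while $B$-vertices have all $u_i$-coordinates equal to $2$. Finally one checks the landmarks themselves: $u_i$ is the unique vertex with $d(\cdot,u_i)=0$, and similarly for $w_i$, and $d_{u_i,S}$, $d_{w_j,S}$ are pairwise distinct and distinct from all other distance vectors (e.g.\ $d(u_i,u_j) \in \{2,3\}$ while $A$-vertices are within distance $2$ of each $u_\ell$; the precise bookkeeping is short).

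The main obstacle I anticipate is not conceptual but the adjacency/distance bookkeeping for pairs of $A$-vertices whose labels are complementary and for the attached landmark vertices $u_i, w_j$ — one must confirm that the diameter-$2$ structure of $K_{n,n}$ routes all the needed distances correctly so that the distance vectors genuinely separate every pair of vertices, and that dropping to nonzero labels (rather than all $2^m$ strings) is exactly what makes each landmark's zero-coordinate unique. Everything else parallels the earlier constructions in this section.
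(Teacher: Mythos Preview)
Your upper bound and the shape of your lower-bound construction match the paper's exactly: start with a balanced complete bipartite graph, attach $m=\lfloor k/2\rfloor$ landmark vertices to each side whose adjacencies encode binary labels, and then trim one vertex per side. However, your distance bookkeeping has a concrete error. You write that ``$a$ and $u_i$ are at distance $2$ via any $b\in B$ adjacent to $u_i$,'' but by your own construction $u_i$ is joined only to vertices of $A$; no $b\in B$ is adjacent to $u_i$. Consequently, for $a\in A$ with $i$th digit $1$ the shortest route is $a\to b\to a'\to u_i$ and $d(a,u_i)=3$, not $2$.

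This slip propagates: your separation of $A$ from $B$ (``$A$-vertices have a $u$-coordinate equal to $1$'') fails for the all-ones label, which \emph{is} among your nonzero labels, since that $a$ has no $u$-coordinate equal to $1$. The paper handles this by deleting the all-ones vertex on each side rather than the all-zeros vertex; with that choice every remaining $A$-vertex is adjacent to some $u_i$ and the adjacency-based argument goes through verbatim. Alternatively, once you correct the distances to lie in $\{1,3\}$, the $u$-coordinates of any $a\in A$ never equal $2$ while every $b\in B$ has all $u$-coordinates equal to $2$, which separates the sides without any deletion hypothesis. Either fix repairs the proof; as written, the verification step does not stand.
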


\begin{proof}
The upper bound $n \leq 3^{k}/2$ follows from Theorem \ref{fordiam} since $K_{n,n}$ has diameter $2$ and $2n$ vertices.

For the lower bound, define $G$ to be the graph obtained from $K_{2^{\lfloor k/2 \rfloor},2^{\lfloor k/2 \rfloor}}$ by adding $2\lfloor k/2 \rfloor$ vertices $u_{1}, \dots, u_{\lfloor k/2 \rfloor}, r_{1}, \dots, r_{\lfloor k/2 \rfloor}$ with edges defined as follows: For each vertex $v$ on the left side of the copy of $K_{2^{\lfloor k/2 \rfloor},2^{\lfloor k/2 \rfloor}}$, label $v$ with a binary string of length $\lfloor k/2 \rfloor$, and do the same with each vertex on the right side of the copy of $K_{2^{\lfloor k/2 \rfloor},2^{\lfloor k/2 \rfloor}}$. For each $i$, add an edge from $u_{i}$ to $v$ on the left side of the copy if the $i^{th}$ digit of $v$ is $0$ and add an edge from $r_{i}$ to $v$ on the right side of the copy if the $i^{th}$ digit of $v$ is $0$.

Let $S$ be the vertex subset consisting of the new vertices and suppose that $d_{x, S} = d_{y, S}$ for vertices $x, y \in V(G)$. If there exists $z \in S$ such that $d(x, z) = 0$, then $x = z$. Otherwise $d(x, z) > 0$ for all $z \in S$, so $x$ was a vertex in the copy of $K_{2^{\lfloor k/2 \rfloor},2^{\lfloor k/2 \rfloor}}$. Since $d_{x, S} = d_{y, S}$, $x$ and $y$ are both adjacent to the same elements of $S$, so $x = y$ unless $x$ and $y$ were both labeled with all-ones binary strings. We delete the vertices on each side that were labeled with all-ones binary strings. This gives a lower bound of $n \geq 2^{\lfloor k/2 \rfloor}-1$.
\end{proof}

\begin{thm}\label{emdbcomp}
The maximum possible value of $n$ for which some graph of edge metric dimension $\leq k$ contains $K_{n,n}$ as a subgraph is between $2^{\lfloor k/2 \rfloor}$ and $3^{k/2}$.
\end{thm}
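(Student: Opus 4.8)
The plan is to prove the upper bound from Theorem~\ref{fordiame} and the lower bound from an explicit construction modeled on the one in Theorem~\ref{mdknn}. For the upper bound, note that $K_{n,n}$ has diameter $2$ and $n^2$ edges, so Theorem~\ref{fordiame} gives $n^2 \le 3^k$, i.e.\ $n \le 3^{k/2}$, for any graph of edge metric dimension at most $k$ containing it. For the lower bound I would set $m = \lfloor k/2\rfloor$ and $n = 2^m$, and build $G$ from $K_{n,n}$ by giving the left vertices and the right vertices each the $2^m$ binary strings of length $m$ as labels, then adding new vertices $u_1,\dots,u_m,r_1,\dots,r_m$, joining $u_i$ to each left vertex whose $i$th digit is $0$ and $r_i$ to each right vertex whose $i$th digit is $0$, with no further edges. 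The claim to verify is that $S=\{u_1,\dots,u_m,r_1,\dots,r_m\}$, which has $2m\le k$ elements, resolves the edges of $G$; this yields $\edim(G)\le 2m\le k$ and $G\supseteq K_{n,n}$ with $n=2^{\lfloor k/2\rfloor}$.

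To carry this out I would first record the relevant distances: for a left vertex $a$ one has $d(a,u_i)=1$ when $a_i=0$ and $d(a,u_i)\ge 2$ when $a_i=1$, while $d(a,r_j)=2$ for every $j$; the symmetric statements hold for right vertices, and any two vertices of $S$ are at distance at least $2$ from one another. Taking the minimum over the two endpoints of each edge then shows: a bipartite-part edge $\{a,b\}$ has distance vector $(1+a_1,\dots,1+a_m,1+b_1,\dots,1+b_m)$, which determines $(a,b)$; an edge $\{u_i,a\}$ (so $a_i=0$) has its $u_i$-coordinate equal to $0$, its $u_j$-coordinate equal to $1+a_j$ for $j\ne i$, and every $r$-coordinate equal to $2$; and symmetrically for the edges $\{r_i,b\}$. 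The distinctness check splits into cases: bipartite-part edges have no zero coordinate, separating them from every landmark-incident edge as well as from each other; two edges at $u_i$ and $u_{i'}$ with $i\ne i'$ are separated by the position of the unique zero coordinate, and two edges $\{u_i,a\},\{u_i,a'\}$ with the same $u_i$ are separated by the coordinates $1+a_j$ versus $1+a'_j$ for $j\ne i$; and an edge at some $u_i$ is separated from an edge at some $r_j$ because the former has a zero among its $u$-coordinates while the latter has all $u$-coordinates equal to $2$. The edges at the $r_i$'s are handled symmetrically.

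The part that takes the most care is precisely this last verification — that $S$ resolves the landmark-incident edges, not just the $n^2$ edges of the copy of $K_{n,n}$. Two features keep it routine: first, $d(\{u_i,a\},u_j)=\min\bigl(2,\,d(a,u_j)\bigr)$, so it equals $1+a_j$ whether $d(a,u_j)$ is $2$ or $3$, and one never has to pin down the larger distances exactly; second, unlike in Theorem~\ref{mdknn}, the full distance vectors already separate the edges incident to the all-ones-labelled vertices, so no vertices need to be deleted from the construction.
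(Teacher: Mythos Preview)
Your proposal is correct and follows essentially the same approach as the paper: the upper bound via Theorem~\ref{fordiame} is identical, and the lower-bound construction is exactly the graph from Theorem~\ref{mdknn} before the deletion of the all-ones vertices, with $S=\{u_1,\dots,u_m,r_1,\dots,r_m\}$ shown to be an edge-resolving set. Your case analysis is somewhat more explicit than the paper's (you spell out the separation of $u$-incident from $r$-incident edges and the exact distance vectors), but the argument is the same in substance.
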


\begin{proof}
The upper bound $n \leq 3^{k/2}$ follows from Theorem \ref{fordiame} since $K_{n,n}$ has diameter $2$ and $n^2$ edges. For the lower bound, define $G$ to be the same graph as in Theorem \ref{mdknn}, before we deleted the vertices on each side that were labeled with all-ones binary strings.

Let $S$ be the vertex subset consisting of the new vertices and suppose that $d_{e, S} = d_{f, S}$ for edges $e, f \in E(G)$. If there exists $x \in S$ such that $d(e, x) = 0$, then $e$ is adjacent to $x$, so $e$ was an added edge. Let $v$ be the other vertex besides $x$ in the edge $e$. Note that for all $t \in S$ such that $t \neq x$, we have $d(e, t) = \min(2, d(v, t))$. There are $2^{\lfloor k/2 \rfloor-1}$ edges adjacent to $x$, and each such edge $e'$ satisfies $d(e', t) = \min(2, d(v', t))$ for all $t \in S$ such that $t \neq x$, where $v'$ is the other vertex in edge $e'$ besides $x$. Thus each of the edges adjacent to $x$ has distinct distance vectors, so $f = e$.

Now suppose that there does not exist $x \in S$ such that $d(e, x) = 0$. Thus $e$ was an edge from the copy of $K_{2^{\lfloor k/2 \rfloor},2^{\lfloor k/2 \rfloor}}$. Since $d_{e, S} = d_{f, S}$, the left vertices in $e$ and $f$ are adjacent to the same elements of $S$, and the right vertices in $e$ and $f$ are adjacent to the same elements of $S$. Thus $e$ and $f$ have the same left and right vertices, so $e = f$. This gives a lower bound of $n \geq 2^{\lfloor k/2 \rfloor}$.
\end{proof}

Using the pattern avoidance bounds, we have several corollaries about the number of edges, chromatic number, and degeneracy.

\begin{cor}
The number of edges in a connected graph of order $n$ and metric dimension $k$ is at most $(3^k-1) n / 2$. 
\end{cor}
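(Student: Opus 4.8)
The plan is to bound the degeneracy of $G$ using Theorem \ref{mdstar}, and then convert a degeneracy bound into an edge bound via the standard greedy/peeling argument. First I would observe that if $G$ has metric dimension $k$, then $G$ cannot contain $K_{1,n}$ as a subgraph for $n \geq 3^k$; equivalently, every vertex of $G$ has degree at most $3^k - 1$. More importantly, this property is inherited by every subgraph $H$ of $G$: although $H$ itself may have larger metric dimension, any vertex of $H$ together with its neighbors in $H$ forms a star $K_{1,\deg_H(v)}$ that is also a subgraph of $G$, so $\deg_H(v) \leq 3^k - 1$ by Theorem \ref{mdstar}. Hence every subgraph of $G$ has a vertex (in fact, every vertex) of degree at most $3^k - 1$, so the degeneracy of $G$ is at most $3^k - 1$.

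Next I would recall the elementary fact that a graph on $n$ vertices with degeneracy at most $r$ has at most $rn/2$ edges... but here I want the sharper constant. Actually the clean statement is: if every subgraph has a vertex of degree at most $r$, then $|E(G)| \leq rn$ — wait, let me get the constant right. The standard peeling argument removes vertices one at a time, each of degree at most $r$ at the moment of removal, giving $|E(G)| \leq rn$; but one can do better. Since \emph{every} vertex of $G$ has degree at most $3^k-1$ (not just one vertex per subgraph), we have directly $2|E(G)| = \sum_{v} \deg_G(v) \leq (3^k - 1)n$, hence $|E(G)| \leq (3^k - 1)n/2$, which is exactly the claimed bound.

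So the argument is really just two lines: first, every vertex $v$ of $G$ has degree at most $3^k - 1$, since otherwise $G$ would contain $K_{1, 3^k}$ as a subgraph, contradicting the upper bound of $3^k - 1$ in Theorem \ref{mdstar}; second, summing degrees gives $2|E(G)| = \sum_{v \in V(G)} \deg_G(v) \leq (3^k-1)n$, so $|E(G)| \leq (3^k - 1)n/2$.

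I do not anticipate any real obstacle here — the content is entirely in Theorem \ref{mdstar}, and the corollary is a one-step handshake-lemma consequence. The only thing to be slightly careful about is the phrasing ``metric dimension $k$'': Theorem \ref{mdstar} is stated for metric dimension $\leq k$, which is exactly what we need, so no adjustment is required. (If one wanted the degeneracy formulation instead, the same reasoning shows the degeneracy is at most $3^k - 1$, since the star-avoidance property passes to all subgraphs as noted above; but for the edge count the direct degree sum suffices.)
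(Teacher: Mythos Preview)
Your final two-line argument is exactly the paper's proof: Theorem \ref{mdstar} bounds every vertex degree by $3^k-1$, and the handshake lemma gives $|E(G)|\le (3^k-1)n/2$. The detour through degeneracy is unnecessary (as you yourself note), but the core reasoning matches the paper precisely.
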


\begin{proof}
The maximum possible degree of any vertex in a connected graph of metric dimension $k$ is at most $3^k-1$ by Theorem \ref{mdstar}, and the number of edges is half the sum of the degrees.
\end{proof}

The same reasoning also yields the following corollary, using Theorem \ref{emdstar} instead of Theorem \ref{mdstar}.

\begin{cor}
The number of edges in a connected graph of order $n$ and edge metric dimension $k$ is at most $2^{k-1} n$.
\end{cor}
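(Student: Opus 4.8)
The plan is to mimic exactly the argument used for the preceding corollary, replacing the star bound for metric dimension by the corresponding bound for edge metric dimension. The key observation is that if a vertex $v$ has degree $n(v)$ in a connected graph $G$, then the edges incident to $v$ together with $v$ form a copy of $K_{1, n(v)}$ as a subgraph of $G$, with $v$ as the center vertex.

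First I would invoke Theorem \ref{emdstar}: since $G$ has edge metric dimension $k$ and contains $K_{1, n(v)}$ as a subgraph, we must have $n(v) \leq 2^{k}$. As $v$ was arbitrary, every vertex of $G$ has degree at most $2^{k}$. Then I would finish with the handshake identity $|E(G)| = \frac{1}{2}\sum_{v \in V(G)} n(v)$, bounding each term by $2^{k}$ to obtain $|E(G)| \leq \frac{1}{2} \cdot n \cdot 2^{k} = 2^{k-1} n$.

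There is essentially no obstacle here: the entire content is packaged in Theorem \ref{emdstar}, and the only thing to check is the elementary fact that the neighborhood of a vertex, together with that vertex, induces (or at least contains) a star subgraph, which is immediate in a simple graph. The one subtlety worth a sentence is that connectedness guarantees the graph is nonempty and that the degree bound is meaningful, but it is not otherwise needed for the counting step. So the proof will be a single short paragraph, parallel in structure to the proof of the previous corollary.
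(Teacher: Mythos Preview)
Your proposal is correct and matches the paper's approach exactly: the paper simply remarks that the same reasoning as in the previous corollary applies, using Theorem~\ref{emdstar} in place of Theorem~\ref{mdstar} to bound every vertex degree by $2^{k}$ and then halving the sum of degrees.
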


\begin{cor}
The maximum possible chromatic number of any graph of metric dimension $\leq k$ is between $2^k$ and $3^k$. 
\end{cor}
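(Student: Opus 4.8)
The plan is to obtain both bounds as immediate consequences of results already established in this section, so no new construction or argument is really needed.

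For the lower bound, I would take the graph $G$ built in the proof of Theorem \ref{mdcomp}: it has metric dimension exactly $k$ and contains $K_{2^{k}}$ as a subgraph. Since a graph that contains $K_{2^{k}}$ has clique number at least $2^{k}$, it has chromatic number at least $2^{k}$. Hence $\chi(G) \geq 2^{k}$, and so the maximum chromatic number over all graphs of metric dimension $\leq k$ is at least $2^{k}$.

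For the upper bound, the key observation is that Theorem \ref{mdstar} shows no graph of metric dimension $\leq k$ contains $K_{1, 3^{k}}$ as a subgraph, so every vertex of such a graph $G$ has degree at most $3^{k}-1$; that is, $\Delta(G) \leq 3^{k}-1$. A greedy coloring in any vertex order then uses at most $\Delta(G)+1 \leq 3^{k}$ colors, so $\chi(G) \leq 3^{k}$. (Equivalently, this maximum-degree bound is inherited by every subgraph of $G$, so $G$ has degeneracy at most $3^{k}-1$, which again gives $\chi(G) \leq 3^{k}$ and dovetails with the degeneracy half of the surrounding discussion.)

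I do not expect any genuine obstacle here, since both directions follow directly from Theorems \ref{mdcomp} and \ref{mdstar}; the only point needing a word of care is to note that the witness graph from Theorem \ref{mdcomp} indeed has metric dimension $\leq k$, which was already verified in that proof, so it is an admissible example for the lower bound.
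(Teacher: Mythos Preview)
Your proposal is correct and follows essentially the same approach as the paper: the upper bound via the maximum-degree bound from Theorem \ref{mdstar} together with a greedy coloring, and the lower bound via the construction in Theorem \ref{mdcomp} containing $K_{2^{k}}$.
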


\begin{proof}
For the upper bound, list the vertices of the graph in any order. Greedily color them with $3^k$ colors, using a free color for each successive vertex since it has at most $3^k-1$ neighbors. For the lower bound, note that we showed in Theorem \ref{mdcomp} that there exists graphs of metric dimension $k$ that contain $K_{2^{k}}$ as a subgraph. 
\end{proof}

\begin{cor}
The maximum possible degeneracy of any graph of metric dimension $\leq k$ is between $2^{k-1}$ and $3^k-1$. 
\end{cor}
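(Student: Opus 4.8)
The plan is to sandwich the degeneracy between the maximum degree (from above) and the degeneracy of the clique produced in Theorem~\ref{mdcomp} (from below). Recall that for any graph $H$, the degeneracy of $H$ is at most its maximum degree $\Delta(H)$: every subgraph of $H$ has maximum degree at most $\Delta(H)$, hence in particular contains a vertex of degree at most $\Delta(H)$.

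For the upper bound, I would argue as follows. Suppose $\dim(G) \le k$. A vertex of degree $d$ in $G$ is the center of a copy of $K_{1,d}$ in $G$, so Theorem~\ref{mdstar} forces $d \le 3^{k}-1$; that is, $\Delta(G) \le 3^{k}-1$. Combining this with the observation above, the degeneracy of $G$ is at most $3^{k}-1$.

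For the lower bound, I would invoke the explicit graph $G$ from the proof of Theorem~\ref{mdcomp}: it has metric dimension $k$ and contains $K_{2^{k}}$ as a subgraph. Degeneracy is monotone under taking subgraphs, and the degeneracy of $K_{2^{k}}$ is exactly $2^{k}-1$ (it has $2^{k}$ vertices, each of degree $2^{k}-1$), so the degeneracy of $G$ is at least $2^{k}-1 \ge 2^{k-1}$, which gives the claimed lower bound.

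The only genuinely delicate point — and hence the ``main obstacle,'' such as it is — is that metric dimension is \emph{not} monotone under taking subgraphs, so one cannot directly run a degeneracy-style argument over arbitrary subgraphs of $G$ using the hypothesis $\dim(\cdot)\le k$. The proof sidesteps this in two ways: for the upper bound it passes to the maximum-degree bound, which is automatically hereditary; and for the lower bound it uses only that degeneracy itself is subgraph-monotone, together with the single explicit dense subgraph $K_{2^{k}}$ guaranteed by Theorem~\ref{mdcomp}.
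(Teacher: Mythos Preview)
Your proof is correct and follows essentially the same approach as the paper: the upper bound via $\Delta(G)\le 3^k-1$ from Theorem~\ref{mdstar} is exactly the paper's argument. For the lower bound the paper instead notes that the graph $G$ of Theorem~\ref{mdcomp} has minimum degree $2^{k-1}$ (attained at the added vertices $u_i$, each adjacent to $2^{k-1}$ vertices of the clique), whereas you use subgraph-monotonicity of degeneracy applied to the $K_{2^k}$ inside $G$ --- which in fact yields the slightly stronger bound $2^k-1$.
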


\begin{proof}
For the upper bound, note that every subgraph contains a vertex of degree at most $3^k-1$ for the stronger reason that every vertex in the graph has degree at most $3^k-1$. For the lower bound, observe that the graph in Theorem \ref{mdcomp} has degeneracy at least its minimal degree, which is $2^{k-1}$.
\end{proof}

\begin{cor}
The maximum possible degeneracy of any graph of edge metric dimension $\leq k$ is between $2^{\lfloor k/2 \rfloor-1}$ and $2^k$. 
\end{cor}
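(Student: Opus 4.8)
The plan is to deduce both bounds from the pattern-avoidance results already proved. For the upper bound, the key observation is that a vertex $v$ of degree $d$ in any graph $G$ spans, together with its $d$ neighbors, a subgraph isomorphic to $K_{1,d}$; hence if $\edim(G) \le k$, then Theorem \ref{emdstar} forces $d \le 2^k$, so $G$ has maximum degree at most $2^k$. Since deleting vertices or edges never increases a degree, every subgraph of $G$ has all of its vertices of degree at most $2^k$, and in particular contains a vertex of degree at most $2^k$. By the definition of degeneracy this immediately gives that the degeneracy of $G$ is at most $2^k$.

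For the lower bound I would reuse the graph $G$ built in the proof of Theorem \ref{emdbcomp}: the graph obtained from $K_{2^{\lfloor k/2 \rfloor}, 2^{\lfloor k/2 \rfloor}}$ by attaching the $2\lfloor k/2 \rfloor$ vertices $u_1, \dots, u_{\lfloor k/2 \rfloor}, r_1, \dots, r_{\lfloor k/2 \rfloor}$. That proof already exhibits a resolving set for the edges of size $2 \lfloor k/2 \rfloor \le k$, so $\edim(G) \le k$. It then suffices to determine $\delta(G)$: every vertex on the left or right side of the complete bipartite part has degree at least $2^{\lfloor k/2 \rfloor}$, whereas each attached vertex $u_i$ (resp.\ $r_i$) is joined only to the $2^{\lfloor k/2 \rfloor - 1}$ vertices of the left (resp.\ right) side whose $i$th label digit is $0$. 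Thus $\delta(G) = 2^{\lfloor k/2 \rfloor - 1}$, and since the degeneracy of any graph is at least its minimum degree (witnessed by taking the whole graph as the relevant subgraph), $G$ has degeneracy at least $2^{\lfloor k/2 \rfloor - 1}$.

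I do not expect a genuine obstacle here. The only points requiring care are the two elementary inequalities relating degeneracy to degrees — that the degeneracy of a graph is sandwiched between its minimum degree and its maximum degree — and the routine degree count in the construction from Theorem \ref{emdbcomp}. (In fact the $K_{n,n}$ subgraph itself would give the slightly larger lower bound $2^{\lfloor k/2 \rfloor}$ via the degeneracy of $K_{n,n}$, but the minimum-degree argument already matches the form of the analogous corollary for metric dimension and is the one I would write up.)
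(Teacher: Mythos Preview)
Your proposal is correct and follows essentially the same approach as the paper: both use Theorem~\ref{emdstar} (via the maximum-degree bound) for the upper bound and take the minimum degree of the construction from Theorem~\ref{emdbcomp} for the lower bound. Your parenthetical remark that the $K_{n,n}$ subgraph would actually give the slightly stronger lower bound $2^{\lfloor k/2 \rfloor}$ is a nice observation that goes a bit beyond what the paper records.
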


\begin{proof}
For the upper bound, note that every subgraph contains a vertex of degree at most $2^k$. For the lower bound, observe that the graph in Theorem \ref{emdbcomp} has degeneracy at least its minimal degree, which is $2^{\lfloor k/2 \rfloor-1}$.
\end{proof}

\section{Miscellaneous results}\label{mr}

Kelenc et al. \cite{kel} proved that $2$-dimensional grids have edge metric dimension at most $2$ and that the $d$-dimensional cube has edge metric dimension at most $d$. The result below generalizes both of these results.

\begin{thm}
The $n$-dimensional grid graph $\prod_{i = 1}^{n} P_{r_{i}}$ has edge metric dimension at most $n$.
\end{thm}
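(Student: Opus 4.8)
The plan is to write down an explicit resolving set of size $n$ for the edges of $G=\prod_{i=1}^{n}P_{r_i}$; by definition of edge metric dimension this immediately yields $\edim(G)\le n$. Identify $V(G)$ with the set of integer tuples $(a_1,\dots,a_n)$ satisfying $0\le a_i\le r_i-1$, so that $d\big((a_1,\dots,a_n),(b_1,\dots,b_n)\big)=\sum_{i=1}^{n}|a_i-b_i|$ and an edge joins two tuples differing by $1$ in a single coordinate $k$, which I call its \emph{direction}. We may assume $r_i\ge 2$ for all $i$, since deleting a factor $P_1$ does not change the graph and only decreases $n$. Take $S=\{w_1,\dots,w_n\}$, where $w_1=(0,\dots,0)$ and, for $2\le i\le n$, the vertex $w_i$ has $i^{th}$ coordinate $r_i-1$ and all other coordinates $0$. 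The claim to prove is that $S$ resolves $E(G)$.

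First I would compute the distance vector of a typical edge $e$. Let $e$ have direction $k$ and let $(a_1,\dots,a_n)$ be its endpoint with smaller $k^{th}$ coordinate. Since the two endpoints of $e$ agree outside coordinate $k$, a short computation (splitting on whether $i=k$) gives $d(e,w_1)=a_1+\dots+a_n$ and, for $2\le i\le n$,
\[
d(e,w_i)=\Big(\textstyle\sum_{j\ne i}a_j\Big)+(r_i-1-a_i)-[\,i=k\,],
\]
where $[\,i=k\,]\in\{0,1\}$ is the Iverson symbol. Subtracting,
\[
d(e,w_1)-d(e,w_i)=2a_i-r_i+1+[\,i=k\,]\qquad(2\le i\le n).
\]
Thus from $d_{e,S}$ one reads off, for each $i\in\{2,\dots,n\}$, the integer $2a_i-r_i+1+[\,i=k\,]$; its parity relative to that of $r_i$ tells us whether $i=k$, and then its value determines $a_i$. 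Hence $d_{e,S}$ reveals $a_2,\dots,a_n$ together with the direction $k$: either $k$ is the unique index in $\{2,\dots,n\}$ flagged by the parity test, or no index is flagged and $k=1$.

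It then remains only to recover $a_1$, which is immediate since $d(e,w_1)=a_1+\dots+a_n$ uses the smaller-endpoint value in every coordinate, so $a_1=d(e,w_1)-\sum_{j=2}^{n}a_j$ using the values just extracted, regardless of $k$. At this point both the direction $k$ and the full tuple $(a_1,\dots,a_n)$ are determined by $d_{e,S}$, so distinct edges receive distinct distance vectors and $S$ resolves $E(G)$. Specializing to $n=2$ recovers the bound of Kelenc et al.\ that non-path grids have edge metric dimension at most $2$, and specializing to $r_1=\dots=r_n=2$ recovers their bound for the $n$-cube. The part that takes the most care is choosing $S$ correctly: the symmetric-looking candidates (the $n$ axis-corners $w_1,\dots,w_n$ with no origin, or the $n$ neighbors of the origin) already fail for $n=2$, and the working choice is asymmetric — one coordinate gets no dedicated landmark and must be reconstructed from the $w_1$-entry of the distance vector — so the bookkeeping in the two displays above, in particular checking that the Iverson term makes the parity test cleanly separate the direction-$i$ edges from all others, is the point that must be verified with attention.
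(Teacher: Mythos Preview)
Your proof is correct and follows essentially the same approach as the paper: the resolving set is the origin together with $n-1$ axis-corners (you omit the corner in coordinate $1$, the paper omits the one in coordinate $n$), and the decoding argument---use the origin coordinate to get $\sum a_j$, subtract to isolate $a_j+b_j$ (equivalently $2a_i+[\,i=k\,]$) for all but one index, read off the direction by parity, and recover the last coordinate from the sum---is the same in both. Your presentation is a bit more explicit about the parity test, but the content is identical.
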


\begin{proof}
We prove this for the case when $r_{i} > 1$ for all $i$, which suffices to imply the theorem. We identify vertices with points in $n$-dimensional space with integer coordinates such that the $i^{th}$ coordinate is between $0$ and $r_i -1$ inclusive. There is an edge between two points only if those points agree on all but one coordinate, and the points have a difference of $1$ in that coordinate. Let $S$ be the set consisting of the $n$ vertices $(0, \dots, 0)$, $(r_1-1, 0, \dots, 0)$, $(0, r_2-1, 0, \dots, 0)$, $\dots$, $(0, \dots, 0, r_{n-1}-1, 0)$, where each vertex has $n$ coordinates. 

We write edges of the grid graph in the form $(x_1, \dots, x_n) (y_1, \dots, y_n)$, where $x_i \leq y_i \leq x_i+1$ for all $i$. Suppose that two edges $(a_1, \dots, a_n) (b_1, \dots, b_n)$ and $(c_1, \dots, c_n) (d_1, \dots, d_n)$ have the same distance vector with respect to $S$.

By definition, $\sum_{i = 1}^{n} a_{i} = \sum_{i = 1}^{n} c_{i}$ for the coordinate of the distance vector corresponding to the vertex $(0, \dots, 0)$. Moreover, $r_{j}-1-a_j-b_j+\sum_{i = 1}^{n} a_{i} = r_{j}-1-c_j-d_j+\sum_{i = 1}^{n} c_{i}$ for all $1 \leq j \leq n-1$.

If we subtract the first equation from each of the other equations, we obtain $a_j + b_j = c_j + d_j$ for all $1 \leq j \leq n-1$. Since $a_j \leq b_j \leq a_j+1$ and $c_j \leq d_j \leq c_j+1$, we have $a_j = c_j$ and $b_j = d_j$ for all $1 \leq j \leq n-1$.

Combining the last fact with the first equation, we obtain $a_n = c_n$. Thus $b_n = d_n$, or else $(a_1, \dots, a_n) (b_1, \dots, b_n)$ or $(c_1, \dots, c_n) (d_1, \dots, d_n)$ would not be an edge. We have proved that $(a_1, \dots, a_n) (b_1, \dots, b_n) = (c_1, \dots, c_n) (d_1, \dots, d_n)$, so $S$ is a resolving set for the edges of $\prod_{i = 1}^{n} P_{r_{i}}$.
\end{proof}

Zubrilina gave the following characterization of graphs of order $n$ and edge metric dimension $n-1$ and asked for a characterization of graphs of order $n$ and edge metric dimension $n-2$. 

\begin{thm}\label{char1} \cite{zu}
Let $G(V,E)$ be a graph with $|V| = n$. Then $\edim(G) = n-1$ if and only if for any distinct $v_1, v_2 \in V$ there exists $u \in V$ such that $\left\{v_1,u\right\} \in E$, $\left\{v_{2},u\right\} \in E$, and $u$ is adjacent to all non-mutual neighbors of $v_1, v_2$. 
\end{thm}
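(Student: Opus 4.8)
The plan is to reduce the statement to a purely local structural condition by two observations. First, $\edim(G)\le n-1$ for every connected graph on $n$ vertices: if two distinct edges $e,f$ satisfy $d(e,z)=d(f,z)$ for every $z$ in a set $T\subseteq V$, then every vertex of the symmetric difference $e\triangle f$ must lie outside $T$, because a vertex $w\in e\setminus f$ has $d(e,w)=0$ but $d(f,w)\ge 1$; since $|e\triangle f|=4-2|e\cap f|\ge 2$ this forces $|T|\le n-2$, so $V\setminus\{w\}$ is an edge resolving set for any $w$. Consequently $\edim(G)=n-1$ if and only if no $(n-2)$-subset of $V$ resolves the edges (smaller resolving sets can be padded), i.e.\ if and only if for every pair of distinct vertices $v_1,v_2$ there are distinct edges $e,f$ with $d(e,z)=d(f,z)$ for all $z\in V\setminus\{v_1,v_2\}$.

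Next I would pin down what such a confused pair $e,f$ can look like. By the symmetric-difference observation, $e\triangle f\subseteq\{v_1,v_2\}$; since $|e\triangle f|\in\{2,4\}$ for distinct edges, we must have $|e\cap f|=1$ and $e\triangle f=\{v_1,v_2\}$. Writing $e\cap f=\{u\}$, this means $e=\{u,v_1\}$, $f=\{u,v_2\}$ with $u\notin\{v_1,v_2\}$ and $u$ adjacent to both $v_1$ and $v_2$. For such $e,f$ we have $d(e,z)=\min(d(u,z),d(v_1,z))$ and $d(f,z)=\min(d(u,z),d(v_2,z))$, which agree automatically at $z=u$, so the confusion condition is exactly
\[
\min(d(u,z),d(v_1,z))=\min(d(u,z),d(v_2,z))\quad\text{for all }z\in V\setminus\{v_1,v_2,u\}.
\]
Thus the theorem follows once I show that, for $u$ adjacent to both $v_1$ and $v_2$, this displayed condition is equivalent to the statement that $u$ is adjacent to every non-mutual neighbor of $v_1,v_2$.

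For the equivalence, the forward direction is immediate: if $w$ is a non-mutual neighbor, say $w\sim v_1$ and $w\not\sim v_2$, then $w\in V\setminus\{v_1,v_2,u\}$, and plugging $z=w$ into the displayed equation gives $\min(d(u,w),1)=\min(d(u,w),d(v_2,w))$ with $d(v_2,w)\ge 2$, forcing $d(u,w)=1$, so $u\sim w$. The converse is the part I expect to be the main obstacle, since it must control $d(\cdot,z)$ for \emph{all} $z$, not just for neighbors of $v_1,v_2$. Here I would argue by contradiction using a shortest path: if the equation fails at some $z$, then after possibly swapping $v_1,v_2$ we get $d(v_1,z)<d(u,z)$ and $d(v_1,z)<d(v_2,z)$; letting $p$ be the vertex immediately after $v_1$ on a shortest $v_1$–$z$ path, so $p\sim v_1$ and $d(p,z)=d(v_1,z)-1$, one checks case by case that each possibility contradicts one of these two strict inequalities: $p=v_2$ gives $d(v_2,z)<d(v_1,z)$; $p=u$ gives $d(u,z)<d(v_1,z)$; $p$ a mutual neighbor gives $d(v_2,z)\le 1+d(p,z)=d(v_1,z)$; and $p$ a non-mutual neighbor gives $u\sim p$ by hypothesis, hence $d(u,z)\le 1+d(p,z)=d(v_1,z)$. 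This rules out any failure. The only slightly delicate point is the boundary case $d(v_1,z)=1$, where $p=z$ itself; since then $z\sim v_1$, $z\not\sim v_2$, and $z\ne u$, this $z$ is a non-mutual neighbor and falls under the last case, so the argument goes through unchanged, completing the proof.
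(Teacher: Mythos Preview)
The paper does not actually prove this theorem; it is quoted from Zubrilina \cite{zu} without proof, so there is no in-paper argument to compare against. Your proof is correct. Moreover, it follows essentially the same template that the paper itself uses for the analogous Theorem~\ref{char2} (the $n-2$ case, which the author says uses ``a similar method to the one in Zubrilina's proof''): reduce $\edim(G)=n-1$ to the failure of every $(n-2)$-subset to resolve edges; use the symmetric-difference observation to force any confused pair $e,f$ over $V\setminus\{v_1,v_2\}$ to have the form $\{u,v_1\},\{u,v_2\}$ with $u$ a common neighbor; and for the converse, argue by contradiction via a vertex on a shortest path from $v_1$ to an offending $z$. The only cosmetic difference is that you take the \emph{first} vertex $p$ after $v_1$ on the shortest path, whereas the paper (in its $n-2$ proof) takes the first vertex of the path lying in $S$; for $k=1$ these coincide up to the cases you enumerate, so the arguments are effectively the same.
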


We provide a characterization of the graphs of order $n$ and edge metric dimension $\geq n-2$ below, using a similar method to the one in Zubrilina's proof.

\begin{thm}\label{char2}
If $G$ is a connected graph of order $n$, then $\edim(G) \geq n-2$ if and only if for all triples of vertices from $V(G)$ there exists an ordering $v_1, v_2, v_3$ of the triple such that
\begin{enumerate}
\item $v_3$ is adjacent to both $v_1$ and $v_2$ and all non-mutual neighbors of $v_1, v_2$ are adjacent to $v_3$, or
\item  for some $u \in V(G)-\left\{v_1, v_2, v_3\right\}$, $u$ is adjacent to both $v_1$ and $v_2$, all non-mutual neighbors of $v_1, v_2$ in $V(G)-\left\{v_1, v_2, v_3\right\}$ are adjacent to $u$, and any $x \in V(G)-\left\{v_1, v_2, v_3\right\}$ that satisfies $d(x, v_2) > d(x, v_1) = 2$ or $d(x, v_1) > d(x, v_2) = 2$ also satisfies $d(x, u) \leq 2$.
\end{enumerate}
\end{thm}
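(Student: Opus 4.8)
The plan is to first reduce the claim to a local condition on $3$-element vertex subsets. Since adding vertices to a resolving set for the edges keeps it resolving, $\edim(G)\le n-3$ if and only if $V(G)\setminus T$ is a resolving set for the edges for some $T\subseteq V(G)$ with $|T|=3$; hence $\edim(G)\ge n-2$ if and only if for every such $T$ there are distinct edges $e\ne f$ with $d(e,w)=d(f,w)$ for all $w\in V(G)\setminus T$. Writing $T=\{a,b,c\}$, note that if $v\in e\cap(V(G)\setminus T)$ then $d(e,v)=0$, forcing $v\in f$, so $e\cap(V(G)\setminus T)=f\cap(V(G)\setminus T)$. A short case analysis on this common intersection shows that such an unresolved pair exists precisely when one of the following holds: (Type~B) two vertices of $T$ — say $a$ and $b$ — are both adjacent to the third vertex $c$, and $d(\{c,a\},w)=d(\{c,b\},w)$ for all $w\in V(G)\setminus T$; or (Type~A) some $u\in V(G)-T$ is adjacent to two vertices of $T$ — say $a$ and $b$ — and $d(\{u,a\},w)=d(\{u,b\},w)$ for all $w\in V(G)\setminus T$. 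I will show that, after relabelling the triple so that $c$ becomes $v_3$, Type~B is equivalent to alternative (1) and Type~A is equivalent to alternative (2).

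For the Type~B case I use that $\min(d(c,w),d(a,w))\ne\min(d(c,w),d(b,w))$ exactly when one of $d(a,w),d(b,w)$ is strictly smaller than both $d(c,w)$ and the other. This is essentially Zubrilina's argument for Theorem~\ref{char1}: if such a $w\in V(G)\setminus T$ existed with, say, $d(a,w)<\min(d(c,w),d(b,w))$, then the first vertex $p$ after $a$ on a shortest $a$--$w$ path has $d(p,w)=d(a,w)-1$ and cannot be $b$ or $c$; if $p$ is adjacent to $b$ we get $d(b,w)\le d(a,w)$, and if $p$ is a non-mutual neighbor of $a,b$ then the hypothesis gives $p$ adjacent to $c$ and hence $d(c,w)\le d(a,w)$ — either way a contradiction. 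Conversely, any non-mutual neighbor $x$ of $a,b$ that is not adjacent to $c$ is itself such a $w$. Thus Type~B holds if and only if $c$ is adjacent to $a$ and $b$ and every non-mutual neighbor of $a,b$ is adjacent to $c$, which is alternative (1).

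For Type~A the forward direction is routine: if $x\in V(G)-T$ is a non-mutual neighbor of $a,b$ not adjacent to $u$, or if $x\in V(G)-T$ satisfies $d(x,b)>d(x,a)=2$ (or the symmetric inequality) yet $d(x,u)\ge 3$, then in each case $d(\{u,a\},x)\ne d(\{u,b\},x)$, so Type~A forces both clauses of alternative (2). The substantive direction is the converse. Suppose the clauses of (2) hold but some $w\in V(G)-T$ has, say, $d(a,w)<d(u,w)$ and $d(a,w)<d(b,w)$. First I will argue that on every shortest $a$--$w$ path the first vertex $p$ after $a$ must be $c$ and that $c$ is not adjacent to $b$: ruling out $p=b$, $p=u$, $p$ adjacent to $b$, and — via the non-mutual-neighbor clause — $p$ a non-mutual neighbor of $a,b$ in $V(G)-T$ leaves only $p=c$, and $c\sim b$ would give $d(b,w)\le d(a,w)$; this also forces $d(a,w)\ge 2$ since $w\ne c$. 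Now I take the vertex $x$ at distance $2$ from $a$ along such a shortest path; then $x\notin\{a,b,c\}$, $d(a,x)=2$, $d(x,w)=d(a,w)-2$, and $d(b,x)\ge d(b,w)-d(x,w)\ge 3$, so $x$ triggers the extra clause of (2) and $d(x,u)\le 2$. But then $d(u,w)\le d(u,x)+d(x,w)\le d(a,w)$, contradicting $d(a,w)<d(u,w)$. Hence no such $w$ exists, $\{u,a\}$ and $\{u,b\}$ are unresolved by $V(G)-T$, and Type~A is equivalent to alternative (2). Quantifying over all $3$-element subsets $T$ yields the theorem. I expect the main obstacle to be this last converse — in particular, recognizing that the distance-$2$ vertex on the shortest $a$--$w$ path forced through $c$ is precisely the vertex for which the extra distance clause of (2) was built.
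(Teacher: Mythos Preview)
Your proof is correct and follows essentially the same route as the paper's: reduce to the failure of $V(G)\setminus T$ to edge-resolve, classify the unresolved pair as sharing a vertex either inside $T$ (your Type~B, the paper's first case) or in $V(G)\setminus T$ (your Type~A, the paper's second case), and in the hard backward direction of Type~A walk along a shortest $a$--$w$ path, force the first step to be $v_3$, and then use the vertex at distance $2$ to trigger the extra clause of alternative~(2). Your write-up is in fact somewhat more careful than the paper's in two places --- you spell out the Zubrilina-style shortest-path argument for the backward direction of Type~B (the paper simply asserts that $\{v_1,v_3\}$ and $\{v_2,v_3\}$ are unresolved), and you explicitly verify that the distance-$2$ vertex $x$ lies in $V(G)\setminus T$ and has $d(b,x)\ge 3$ --- but the architecture and the key idea are identical.
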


\begin{proof}
We first show that graphs of order $n$ with edge metric dimension $\geq n-2$ have the above properties. Suppose that $G$ is a graph of order $n$ with $\edim(G) \geq n-2$. Therefore for any distinct triple $T$ of vertices from $V(G)$, the set $V(G)-T$ is not a resolving set for the edges of $G$. Fix a $T$ and let $S = V(G)-T$. Since $S$ is not a resolving set for the edges of $G$, there must exist two distinct edges $e \neq f$ such that $d_{e, S} = d_{f, S}$. If there was a vertex in $S$ that is adjacent to exactly one of $e$ or $f$, then $e$ and $f$ would have different distance vectors with respect to $S$. Thus there exists an ordering $v_1, v_2, v_3$ of $T$ such that $e = \left\{v_1, v_3\right\}$ and $f =  \left\{v_2, v_3\right\}$, or there exists $u \in S$ such that $e = \left\{v_1, u\right\}$ and $f =  \left\{v_2, u\right\}$.

In the first case where $e = \left\{v_1, v_3\right\}$ and $f =  \left\{v_2, v_3\right\}$, suppose that some vertex $x$ is a non-mutual neighbor of $v_1, v_2$. Thus $x \in S$, so $x$ must be adjacent to $v_3$ or else $e$ and $f$ would have different distance vectors with respect to $S$.

In the second case where $e = \left\{v_1, u\right\}$ and $f =  \left\{v_2, u\right\}$, suppose that some vertex $x \in S$ is a non-mutual neighbor of $v_1, v_2$. Then $x$ must be adjacent to $u$ or else $e$ and $f$ would have different distance vectors with respect to $S$. 

Moreover for the second case, suppose that some vertex $y \in S$ satisfies $d(y, v_2) > d(y, v_1) = 2$ or $d(y, v_1) > d(y, v_2) = 2$. Since $e$ and $f$ have the same distance vector with respect to $S$, we must have $d(y, u) \leq \min(d(y, v_1), d(y, v_2)) = 2$. This completes the first direction of the proof.

For the other direction, we will show that any connected $n$-vertex graph $G$ with the properties listed above has edge metric dimension at least $n-2$. Let $S$ be a set of $n-3$ vertices from $V(G)$, and suppose that $T$ is the triple of vertices not in $S$. By assumption, there exists an ordering $v_1, v_2, v_3$ of $T$ such that
\begin{enumerate}
\item $v_3$ is adjacent to both $v_1$ and $v_2$ and all non-mutual neighbors of $v_1, v_2$ are adjacent to $v_3$, or
\item  for some $u \in S$, $u$ is adjacent to both $v_1$ and $v_2$, all non-mutual neighbors of $v_1, v_2$ in $S$ are adjacent to $u$, and any $x \in S$ that satisfies $d(x, v_2) > d(x, v_1) = 2$ or $d(x, v_1) > d(x, v_2) = 2$ also satisfies $d(x, u) \leq 2$.
\end{enumerate}

In the first case, the edges $\left\{v_1, v_3\right\}$ and $\left\{v_2, v_3\right\}$ have the same distance vector with respect to $S$ because all non-mutual neighbors of $v_1, v_2$ are adjacent to $v_3$. For the second case, suppose for contradiction that there exists $x \in S$ such that $d(x, \left\{v_1, u\right\}) \neq d(x, \left\{v_2, u\right\})$. 

Without loss of generality, suppose that $d(x, \left\{v_1, u\right\}) < d(x, \left\{v_2, u\right\})$. Then $d(x, v_1) < \min(d(x, u), d(x, v_2))$. There is a vertex $y \in S$ on a minimal path $P$ from $v_1$ to $x$ that is the closest vertex in $S \cap P$ to $v_1$. Thus either $\left\{y, v_1\right\} \in E(G)$ and $\left\{y, v_2\right\} \not \in E(G)$, or $\left\{y, v_1\right\} \not \in E(G)$ and $\left\{y, v_3\right\} \in E(G)$ and $\left\{v_1, v_3\right\} \in E(G)$ and $d(y, v_2) > 2$.

If $\left\{y, v_1\right\} \in E(G)$ and $\left\{y, v_2\right\} \not \in E(G)$, then $y$ is a non-mutual neighbor of $v_1, v_2$, so $\left\{y, u\right\} \in E(G)$, which contradicts the fact that $d(x, v_1) < d(x, u)$. If $\left\{y, v_1\right\} \not \in E(G)$ and $\left\{y, v_3\right\} \in E(G)$ and $\left\{v_1, v_3\right\} \in E(G)$ and $d(y, v_2) > 2$, then $d(y, v_1) = 2$, so $d(y, u) \leq 2$, which again contradicts that $d(x, v_1) < d(x, u)$.
\end{proof}

Note that if $P_n$ denotes the property equivalent to being a connected $n$-vertex graph of edge metric dimension $n-1$ in Theorem \ref{char1} and $Q_n$ denotes the property equivalent to being a connected $n$-vertex graph of edge metric dimension $\geq n-2$ in Theorem \ref{char2}, then $\neg P_n \wedge Q_n$ is equivalent to being a connected $n$-vertex graph of edge metric dimension $n-2$.

We bound the diameter of connected $n$-vertex graphs of edge metric dimension $n-2$ below, using the characterization proved in the last theorem. 

\begin{thm}
If $G$ is a connected $n$-vertex graph such that $\edim(G) = n-2$, then $G$ has diameter at most $5$.
\end{thm}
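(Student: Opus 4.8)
The plan is to argue by contradiction, using only a single weak consequence of the characterization in Theorem~\ref{char2}. Suppose $G$ is a connected $n$-vertex graph with $\edim(G)=n-2$ and diameter at least $6$. First I would observe that in \emph{both} alternatives of Theorem~\ref{char2} the two vertices $v_1,v_2$ of the chosen ordering have a common neighbour: in case (1) the vertex $v_3$ is adjacent to both $v_1$ and $v_2$, and in case (2) the vertex $u$ is adjacent to both $v_1$ and $v_2$. Hence $d(v_1,v_2)\le 2$ in either case. Since $\edim(G)=n-2\ge n-2$, Theorem~\ref{char2} applies to $G$, and therefore: for every triple of distinct vertices of $G$, at least one of the three pairs is at distance at most $2$.

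Next I would use the assumed diameter together with the metric of geodesics. Choose vertices $a,b$ with $d(a,b)\ge 6$ (which in particular forces $n\ge 7$, so triples of vertices exist and the previous paragraph is applicable) and fix a shortest $a$--$b$ path $a=p_0,p_1,\dots,p_D=b$ with $D\ge 6$. Every subpath of a shortest path is itself shortest, so $d(p_i,p_j)=|i-j|$ for all $i,j$. Applying the consequence of Theorem~\ref{char2} to the triple $\{p_0,p_3,p_6\}$ then gives a contradiction, since $d(p_0,p_3)=3$, $d(p_3,p_6)=3$, and $d(p_0,p_6)=6$, so all three pairwise distances exceed $2$. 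Therefore $G$ has diameter at most $5$.

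The only step requiring any care — and the one I would call the ``main obstacle,'' though it is a mild one — is confirming that \emph{neither} of the two alternatives in the characterization can be satisfied by a pairwise-far triple, i.e.\ that both (1) and (2) genuinely supply a common neighbour of $v_1$ and $v_2$; the rest is the elementary fact that distances add along a geodesic. I would also note that this is exactly the $k=2$ instance of the more general claim (proved separately) that a connected $n$-vertex graph with $\edim(G)=n-k$ has diameter at most $3k-1$, and that the same ``every small vertex set contains a close pair'' mechanism, applied to equally spaced vertices $p_0,p_3,p_6,\dots$ along a geodesic, is what will drive that general bound.
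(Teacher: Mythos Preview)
Your proposal is correct and follows essentially the same approach as the paper: both argue by contradiction, extract from Theorem~\ref{char2} the single consequence that every triple contains a pair at distance at most $2$, and then apply this to the vertices at positions $0,3,6$ along a geodesic of length at least $6$. Your write-up is in fact slightly more explicit than the paper's in verifying that both alternatives of Theorem~\ref{char2} force $d(v_1,v_2)\le 2$, and your closing remark about the $k=2$ instance of the general $3k-1$ bound matches the paper's own subsequent development.
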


\begin{proof}
Suppose for contradiction that $G$ has diameter at least $6$. Then there exist vertices $v_1, v_2, v_3, v_4, v_5, v_6, v_7 \in V(G)$ which form a shortest path (in order) between $v_1$ and $v_7$. Thus $d(v_1, v_4) = 3$, $d(v_4, v_7) = 3$, and $d(v_1, v_7) = 6$. This contradicts the fact from the last theorem that in any connected $n$-vertex graph of edge metric dimension $n-2$, among any three vertices there must exist two vertices $x, y$ such that $d(x, y) \leq 2$. 
\end{proof}

Part of the characterization for $n$-vertex graphs of edge metric dimension $\geq n-2$ can be generalized to graphs of edge metric dimension $n-k$ to give an upper bound of $3k-1$ on the diameter.

\begin{lem}
If $G$ is a connected graph of order $n$ with $\edim(G) \geq n-k$, then for all $(k+1)$-tuples $T$ of vertices from $V(G)$, there exists an ordering $v_1, \dots, v_{k+1}$ of the vertices in $T$ such that $d(v_1, v_2) \leq 2$.
\end{lem}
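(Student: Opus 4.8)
The plan is to imitate the first direction of the proof of Theorem~\ref{char2}, with triples replaced by $(k+1)$-tuples. First I would unpack the hypothesis: if $\edim(G) \geq n-k$ and $T$ is any set of $k+1$ distinct vertices, then $S = V(G) - T$ has only $n-k-1 < \edim(G)$ vertices, so $S$ is not a resolving set for the edges of $G$. Hence there are distinct edges $e \neq f$ with $d_{e,S} = d_{f,S}$.

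Next I would read off structural information from this coincidence of distance vectors. For any vertex $x$ and edge $e$, we have $d(x,e) = 0$ exactly when $x$ is an endpoint of $e$, and $d(x,e) \geq 1$ otherwise. So if some $x \in S$ were an endpoint of exactly one of $e$ and $f$, then $d_{e,S}$ and $d_{f,S}$ would already differ in the coordinate indexed by $x$, a contradiction. Therefore every endpoint of $e$ that is not also an endpoint of $f$ lies in $T$, and symmetrically every endpoint of $f$ that is not an endpoint of $e$ lies in $T$.

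Then I would finish by a short case analysis on how $e$ and $f$ meet. Since $G$ is simple and $e \neq f$, the edges share at most one endpoint. If they share a common endpoint $w$, write $e = \{w, x\}$ and $f = \{w, y\}$ with $x \neq y$; both $x$ and $y$ are endpoints of exactly one of the two edges, so $x, y \in T$, and $d(x, y) \leq d(x, w) + d(w, y) = 2$. If $e$ and $f$ are vertex-disjoint, then their four endpoints are distinct and each belongs to exactly one of $e, f$, so all four lie in $T$; the two endpoints of $e$ are then vertices of $T$ at distance $1$. In either case $T$ contains two vertices at distance at most $2$, and ordering $T$ so that these come first as $v_1, v_2$ gives the conclusion.

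I do not anticipate a real obstacle: this is essentially the argument already used for $k = 2$ in Theorem~\ref{char2}, the only new feature being that $e$ and $f$ may now be disjoint (which is impossible when $|T| \leq 3$ but causes no trouble in general, since disjoint edges still force two adjacent vertices into $T$). The only points requiring a little care are the trivial boundary cases where $S$ is empty or $G$ has fewer than two edges, but these are vacuous or immediate once $n$ is large enough relative to $k$.
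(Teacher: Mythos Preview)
Your proposal is correct and follows essentially the same approach as the paper's proof: both use that $S=V(G)-T$ fails to resolve the edges, find $e\neq f$ with $d_{e,S}=d_{f,S}$, observe that any endpoint lying in $S$ must be common to both edges, and then read off two vertices of $T$ at distance at most $2$. The only cosmetic difference is that the paper splits your ``shared endpoint'' case into two subcases according to whether the common vertex $w$ lies in $T$ or in $S$, giving three cases in total rather than your two.
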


\begin{proof}
Suppose that $G$ is a graph of order $n$ with $\edim(G) = n-k$. Therefore for any distinct $(k+1)$-tuple $T$ of vertices from $V(G)$, the set $V(G)-T$ is not a resolving set for the edges of $G$. As in the proof for $k = 2$, fix a $T$ and let $S = V(G)-T$. Since $S$ is not a resolving set for the edges of $G$, there must exist two distinct edges $e \neq f$ such that $d_{e, S} = d_{f, S}$. If there was a vertex in $S$ that is adjacent to exactly one of $e$ or $f$, then $e$ and $f$ would have different distance vectors with respect to $S$. Thus there exists an ordering $v_1, \dots, v_{k+1}$ of $T$ such that $e = \left\{v_1, v_3\right\}$ and $f =  \left\{v_2, v_3\right\}$, or $e = \left\{v_1, v_2\right\}$ and $f =  \left\{v_3, v_4\right\}$, or there exists $u \in S$ such that $e = \left\{v_1, u\right\}$ and $f =  \left\{v_2, u\right\}$. In each case, $d(v_1, v_2) \leq 2$.
\end{proof}

\begin{thm}
If $G$ is a connected $n$-vertex graph such that $\edim(G) = n-k$, then $G$ has diameter at most $3k-1$.
\end{thm}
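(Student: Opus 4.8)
The plan is to argue by contradiction using the lemma immediately above, which says that among any $k+1$ vertices of $G$ there must be two at distance at most $2$. So suppose for contradiction that $G$ has diameter at least $3k$. Then there exist vertices $u_0$ and $u_{3k}$ with $d(u_0, u_{3k}) = 3k$ (or more — take any pair realizing a distance $\geq 3k$ and truncate a geodesic), and I would fix a shortest path $u_0, u_1, \dots, u_{3k}$ between them, so that $d(u_i, u_j) = |i - j|$ for all $0 \le i, j \le 3k$.

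The key step is then to consider the $(k+1)$-tuple of equally spaced vertices $T = \{u_0, u_3, u_6, \dots, u_{3k}\}$ along this geodesic. For any two distinct vertices $u_{3i}, u_{3j} \in T$ we have $d(u_{3i}, u_{3j}) = 3|i - j| \ge 3$, so \emph{every} pair of vertices in $T$ is at distance greater than $2$. This directly contradicts the lemma, which guarantees an ordering $v_1, \dots, v_{k+1}$ of $T$ with $d(v_1, v_2) \le 2$. Hence no such geodesic of length $3k$ can exist, and the diameter of $G$ is at most $3k-1$.

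I do not expect a genuine obstacle here: once the lemma is in hand, the only idea needed is to pack $k+1$ pairwise-far-apart vertices onto a long geodesic, and spacing them at intervals of $3$ does exactly that. The one point to double-check is the counting — that $u_0, u_3, \dots, u_{3k}$ is indeed a set of $k+1$ vertices and that a diameter of $3k$ (rather than $3k-1$) is what is forced to fail — but this is immediate. (As a sanity check, the case $k = 2$ recovers the bound of $5$ proved above, since a geodesic of length $6$ gives the three vertices $u_0, u_3, u_6$, pairwise at distance $\ge 3$.)
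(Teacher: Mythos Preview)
Your proposal is correct and follows essentially the same argument as the paper: assume a geodesic of length at least $3k$, pick the $k+1$ vertices spaced at distance $3$ along it, and contradict the lemma since all pairs are at distance at least $3$. The only cosmetic difference is indexing (the paper labels the path $v_1,\dots,v_{3k+1}$ and takes $T=\{v_{3i+1}:0\le i\le k\}$).
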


\begin{proof}
Suppose for contradiction that $G$ has diameter at least $3k$. Then there exist vertices $v_1, \dots, v_{3k+1} \in V(G)$ which form a shortest path (in order) between $v_1$ and $v_{3k+1}$. Define $T = \left\{v_{3i+1}: 0 \leq i \leq k \right\}$. Observe that $d(x, y) > 2$ for all $x, y \in T$ such that $x \neq y$. This contradicts the fact from the last lemma that in any connected $n$-vertex graph of edge metric dimension $n-k$, among any $k+1$ vertices there must exist two vertices $x, y$ such that $d(x, y) \leq 2$. 
\end{proof}

\section{Conclusion and open problems}

We have found the maximum value of $n$ for which some graph of metric dimension $k$ contains the complete graph $K_{n}$ and the maximum value of $n$ for which some graph of edge metric dimension $k$ contains $K_{1,n}$. However, for all of the other bounds in this paper, there is a gap between the upper and lower bounds.

We have characterized $n$-vertex graphs with edge metric dimension $n-2$ and proved that they have diameter at most $5$. We also proved that $n$-vertex graphs with edge metric dimension $n-k$ have diameter at most $3k-1$. Two natural problems are to characterize $n$-vertex graphs with edge metric dimension $n-k$ for fixed $k > 2$, and to find the exact value for the maximum possible diameter of an $n$-vertex graph of edge metric dimension $n-k$.

One quantity that we did not bound in this paper is the maximum value of $n(k)$ for which some graph of edge metric dimension $k$ contains the complete graph $K_{n(k)}$. Since the edge metric dimension of $K_{n}$ is $n-1$, there is a lower bound of $n(k) \geq k+1$. Since the complete subgraph can have at most $2^{k}$ edges by the pattern avoidance diameter bounds in Section \ref{btdd}, there is an upper bound of $n(k) = O(2^{k/2})$. What is the exact value of $n(k)$? 

We also have found an upper bound of $d$ on the edge metric dimension of $d$-dimensional grid graphs $\prod_{i = 1}^{d} P_{r_{i}}$. A related open problem is to find the maximum possible edge metric dimension for graphs of the form $\prod_{i = 1}^{d} C_{r_{i}}$.

\section*{Acknowledgement}

The author thanks the anonymous referee for many helpful comments and correcting errors in the paper.

\end{document}